\documentclass[a4, 12pt]{amsart}
\usepackage{amssymb}
\usepackage{amstext}
\usepackage{amsmath}
\usepackage{amscd}
\usepackage{latexsym}
\usepackage{amsfonts}
\usepackage{color}
\usepackage{enumerate}
\usepackage{textcomp}
\usepackage[all]{xy}
\usepackage{graphicx}
\usepackage{appendix}
\usepackage[]{algorithm2e}

\usepackage{mathtools}
\DeclareFontFamily{OT1}{rsfs}{}
\DeclareFontShape{OT1}{rsfs}{n}{it}{<-> rsfs10}{}
\DeclareMathAlphabet{\mathscr}{OT1}{rsfs}{n}{it}

\usepackage[pagebackref]{hyperref}
\usepackage{hyperref}
\hypersetup{
    colorlinks=true, 
    linkcolor=red, 
}

\usepackage[alphabetic,initials]{amsrefs}

\topmargin=0in
\oddsidemargin=0in
\evensidemargin=0in
\textwidth=6.5in
\textheight=8.5in

\theoremstyle{plain}
\newtheorem{thm}{Theorem}[section]
\newtheorem*{thm*}{Theorem}
\newtheorem*{cor*}{Corollary}
\newtheorem*{defn*}{Definition}

\newtheorem{prop}[thm]{Proposition}
\newtheorem{lem}[thm]{Lemma}
\newtheorem{cor}[thm]{Corollary}
\newtheorem{claim}[thm]{Claim}
\newtheorem*{claim*}{Claim}

\theoremstyle{definition}
\newtheorem{defn}[thm]{Definition}

\newtheorem{ques}[thm]{Question}

\theoremstyle{remark}

\newtheorem*{tpf1}{{\sl Proof of Theorem \ref{theorem1}}}
\newtheorem*{tpf2}{{\sl Proof of Theorem \ref{theorem2}}}

\numberwithin{equation}{thm}
\def\Hom{\mathrm{Hom}}

\def\Ext{\mathrm{Ext}}

\def\Tor{\mathrm{Tor}}

\def\rank{\mathrm{rank }}
\def\a{\mathfrak a}

\def\depth{\mathrm{depth}}

\def\a_i{\underline {a_i}}

\def\coker{\mathrm{coker}}
\def\Hilb{\mathrm{Hilb}}
\def\Pic{\mathrm{Pic}}

\begin{document}

\title[]{Classification and geometric properties of surfaces with property ${\bf N}_{3,3}$}

\author[H.L. Truong]{Hoang Le Truong}
\address{Institute of Mathematics, VAST, 18 Hoang Quoc Viet Road, 10307
Hanoi, Viet Nam}
\email{hltruong@math.ac.vn\\
	truonghoangle@gmail.com}

\thanks{2020 {\em Mathematics Subject Classification\/}: 
14J60(Primary),  14D20, 14M06, 13C14 (Secondary).}
\keywords{ Adjunction mapping, Liaison,  Cubic fourfolds,  Matrix factorizations, Arithmetically Cohen-Macaulay}

\begin{abstract} 
Let $X$ be a closed subscheme of codimension $e$ in a projective space. One says that $X$ satisfies property ${\bf N}_{d,p}$, if the $i$-th syzygies of the homogeneous coordinate ring are generated by elements of degree $<d+i$ for $0\le i\le p$. The geometric and algebraic properties of smooth projective varieties satisfying property ${\bf N}_{2,e}$ are well understood, and the complete classification of these varieties is a classical result. The aim of this paper is to study the next case: projective surfaces in $\Bbb P^5$ satisfying property ${\bf N}_{3,3}$. In particular, we give a classification of such varieties using adjunction mappings and  we also provide illuminating examples of our results via calculations done with Macaulay 2.  As corollaries,   we study the CI-biliaison equivalence class of smooth projective surfaces of degree $10$ satisfying  property ${\bf N}_{3,3}$ on a cubic fourfold.


\end{abstract}

\maketitle

\section{Introduction }

A classical problem in algebraic geometry and commutative algebra is the study of
the equations defining projective varieties and of their syzygies. In \cite{EGHP06}, D. Eisenbud et al. introduced the condition ${\bf N}_{d,p}$ for a graded ring as an indication of the presence of simple syzygies. Let us recall the definition. 
Let $R=k[x_0, \ldots, x_n]$ denote the homogeneous coordinate ring of the projective space $\Bbb P^n$ over an algebraically closed field $k$ of characteristic zero, and let $I_X\subset R$ denote the homogeneous ideal of an algebraic set $X\subset \Bbb P^n$ of codimension $e$. 
We say that $X $ satisfies property  ${\bf N}_{d,p}$($p\le \infty$), for some $d \ge 2$, if 
$$\beta_{i,j}(X):=\dim \Tor^R_i(R/I_X, k)_{i+j} = 0,$$ for $i \le p$ and $j \ge d$. This is the same as property ${\bf N}_p$ defined by Green and Lazarsfeld in \cite{GrL85} if $X$ is projectively normal and $d = 2$. Notice that property ${\bf N}_{d,\infty}$ means that $X$ is $d$-regular.  One says that $X$ is arithmetically Cohen–Macaulay (ACM) with a $d$-linear resolution if the only non-zero Betti numbers are $\beta_{0,0}(X)$ and $\beta_{i,d-1}(X)$ for $i =1, 2, \ldots, e$.

One of the main interesting problems is to understand geometric properties of algebraic sets
satisfying property ${\bf N}_{d,p}$. 
In \cite{AhK15}, J. Ahn and S. Kwak showed that if $X$ satisfies property  ${\bf N}_{d,e}$, then 
$$\deg(X) \le \binom{e+d-1}{d-1}.$$
The set of algebraic sets satisfying the above equality is quite large. It contains many known examples such as the algebraic set defined by the ideal of maximal minors of a $1$-generic $d \times (e +d -1)$-matrix of linear forms, or the $(d -1)$-secant variety of a rational normal curve of degree $(e +2d -3)$.  

For $d =2$, $X$ satisfies property ${\bf N}_{2,e}$ if and only if $X$ is  ACM with $2$-linear resolution (see \cite[Corollary1.8]{EGHP05} and  \cite[Corollary1.11]{EiG84}). It follows that if in addition $\deg(X) =1 +e$, then 
$X$ is a variety of minimal degree. 
Recall that a variety of minimal degree is either a rational normal scroll, a quadric
hypersurface $Q_n \subset \Bbb P^{n+1}$, or the  Veronese surface in $ \Bbb P^5$ (see \cite{EiH87}). 
In this direction, the papers \cite{AlR02, AhK11, BeE10, BEL91,  EGHP05, GrL88, Kwa98, KwP05, Laz87} give a small sample of the kinds of investigations that have been carried out. However, not very much is actually known about algebraic sets satisfying property ${\bf N}_{d,p}$, for $d \ge3$. 
For $d =3$, J. Ahn and S. Kwak showed in \cite{AhK15} that 
$X$ satisfies property ${\bf N}_{3,e}$ and $\deg(X) =\binom{e+2}{2}$ if and only if $X$ is ACM with a $3$-linear resolution.
Along this line, it is natural to study the classification problem of smooth projective varieties $X$ satisfying  property  ${\bf N}_{3,e}$ and $\deg(X) =\binom{e+2}{2}$. For $e=2$, smooth projective varieties $X$ of degree $6$ satisfying  property  ${\bf N}_{3,2}$ are defined by the ideal of maximal minors of a $1$-generic $3 \times 4$-matrix of linear forms. Thus the first step would be a classification of smooth projective varieties $X$  of degree $10$ and codimension $3$ satisfying  property  ${\bf N}_{3,3}$. 
For curves, R. Hartshorne gave a complete classification of projective complex curves of degree $10$ satisfying  property  ${\bf N}_{3,3}$ in $\Bbb P^4$ (see \cite{Har01}). 
For surfaces, a numerical classification of rational surfaces of degree $10$ was given by Livorni in \cite{Liv90}. But  the problem of the existence of such surfaces remains open.  In particular, 
it is still an open problem to give a complete geometric classification or characterization of projective complex surfaces of degree $10$ satisfying  property  ${\bf N}_{3,3}$
(\cite[Section 6]{Man00}, \cite{Man01} and \cite[Question 3]{AHK20}).  The following theorem which is the main result of this paper gives an  answer to this problem.

\begin{thm}\label{theorem1}
Let $X$ be a non-degenerate projective surface in $\Bbb P^5$. Then  the statements are equivalent. 
\begin{itemize}
\item[$1)$] $\deg X=10$ and $X$ satisfies property  ${\bf N}_{3,3}$.
\item[$2)$] $X$  is ACM with the $3$-linear resolution
	
	\begin{equation}\label{eqn}
		\begin{tabular}{ c | c c c c c c}
			&   & $0$ & $1 $ &$2 $  &$3 $\\ 
			\hline
			$0$&  & $1    $ & $\cdot$ & $\cdot$ & $\cdot$ \\ 
			$1$&  & $\cdot$ & $\cdot$ &$\cdot$ &$\cdot$ \\ 
			$2$&  & $\cdot$ & $10   $ &$  15 $ &$  6  $ \\ 
		\end{tabular}
		\tag{$*$}
		\end{equation}
	\item[$3)$] $X$ is  an ACM surface of degree $10$ and sectional genus $6$.
\end{itemize}
In this case, $X$ is smooth and $X$ belongs to one of the seven families as in Table \ref{table:S106}.

\begin{table}[h!]
\centering
		\begin{tabular}{ |r| r| c| c|c|c|c|c|c|}
	\hline
	  $K^2_X$&$\mathcal{C}_{\delta}$ &  $H$ &Abstract structure &$h^0(\mathcal N_{X/Y})$\\ 
	\hline
	$-6$&$\mathcal{C}_8$   & $X=\Bbb P^2(5;1^{15})$  & Veronese surface   &$12$	\\ 
	\hline
	$-5$&$\mathcal{C}_{14}$   & $X=\Bbb P^2(6;2^4,1^{10})$  & del Pezzo surface of degree $5$ 	&$10$\\ 
	\hline
	$-4$&$\mathcal{C}_{20}$   & $X=\Bbb P^2(7;3^1,2^6,1^{6})$  & conic bundle over $\Bbb P^1$ of degree $6$ &$8$  	\\ 
	\hline

	$-3$&$\mathcal{C}_{26}$   & $X=\Bbb P^2(7;2^9,1^{3})$  & $\Bbb P^2$  	&$6$\\ 
	\hline
	$-2$&$\mathcal{C}_{32}$   & $X=\Bbb P^2(9;3^6,2^4,1^{1})$  & del Pezzo surface of degree $3$.  &$4$	\\ 
	\hline
	$-1$&$\mathcal{C}_{38}$   & $X=\Bbb P^2(10;3^{10})$  & $\Bbb P^2$    &$2$	\\ 
	\hline
	$0$&$\mathcal{C}_{44}$   & $X$ is a Fano model of  &  	&$0$\\ 
	&  &   an Enriques surface  &  	&\\ 
	\hline
	
\end{tabular}
\caption{ Smooth projective surfaces $X\subset Y\subset\Bbb P^5$ of degree $10$  with property  ${\bf N}_{3,3}$, where $Y$ is a cubic fourfolds and  $\mathcal{C}_{\delta}$ is a locus   of special cubic fourfolds  $Y$ of a discriminant $\delta$ (see Section {\ref{Cdelta}}). }
\label{table:S106}
\end{table}

\end{thm}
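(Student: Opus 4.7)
The plan is to first establish the equivalences (1)$\Leftrightarrow$(2)$\Leftrightarrow$(3) via the Ahn--Kwak theorem and a Hilbert-function computation, and then extract the geometric classification from adjunction theory. For (1)$\Rightarrow$(2), since $X\subset\Bbb P^5$ has codimension $e=3$ and satisfies ${\bf N}_{3,e}$ with $\deg X = 10 = \binom{e+2}{2}$, the Ahn--Kwak result cited in the introduction forces $X$ to be ACM with a $3$-linear resolution, and the precise Betti numbers in $(*)$ are the unique shape of such a resolution realizing the Hilbert series $\tfrac{1-10t^3+15t^4-6t^5}{(1-t)^6}$. The converse (2)$\Rightarrow$(1) is immediate. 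For (2)$\Leftrightarrow$(3): from $(*)$ one reads $P_X(t)=5t^2+1$, whence $\deg X=10$, $\chi(\mathcal O_X)=1$, and via $2g-2=H(H+K_X)$ the sectional genus $g=6$; conversely, an ACM surface with $d=10$, $g=6$ satisfies $K_X\cdot H=0$ and $\chi(\mathcal O_X)=1$, forcing $h^0(\mathcal I_X(2))=0$ and $h^0(\mathcal I_X(3))=10$, which together with ACM uniquely recover the resolution $(*)$.

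For the classification, the key numerical input is $K_X\cdot H=0$. Applied to the ample class $H$, the Hodge index theorem forces $K_X^2\le 0$, with equality if and only if $K_X$ is numerically trivial; this integer parametrizes the seven rows of the table. The main tool is the adjunction morphism $\varphi_{|K_X+H|}$, whose restriction to a smooth hyperplane section $C\in|H|$ is the canonical map of a smooth curve of genus $6$. I would carry out a case analysis on $K_X^2\in\{-6,-5,\ldots,0\}$: iterating the adjunction process (or equivalently passing through Iitaka--Mori contractions) locates the minimal model of $X$, and then pulling $|H|$ back to the blown-up $\Bbb P^2$ produces the explicit linear system $|aL-\sum b_iE_i|$ recorded in the table.

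The case $K_X^2=0$ is separated off using the Enriques--Kodaira classification: since $K_X\equiv 0$ and $\chi(\mathcal O_X)=1$, $X$ must be Enriques, and the polarization data pin down its Fano model in $\Bbb P^5$. For $K_X^2<0$, $X$ is rational, and the assignments (Veronese for $-6$, del Pezzo quintic for $-5$, sextic conic bundle over $\Bbb P^1$ for $-4$, and so on) follow by matching the adjoint and sectional invariants with the standard embeddings of rational surfaces in $\Bbb P^5$. Smoothness of $X$ then follows from the classification, since each realization is given by a very ample system on a smooth surface; alternatively one can argue that a general hyperplane section is a smooth curve of degree $10$ and genus $6$ (by Hartshorne's classification of such curves in $\Bbb P^4$) and invoke a Bertini-type argument.

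The main obstacle, in my view, is verifying that each tabulated linear system $|aL-\sum b_iE_i|$ actually embeds its blow-up as a projectively normal surface with \emph{exactly} the resolution $(*)$, rather than merely the correct numerical invariants, and hence truly satisfies ${\bf N}_{3,3}$. This requires checking non-emptiness, very-ampleness, and the vanishings $h^1(\mathcal I_X(k))=0$ for all $k$; as the abstract indicates, these checks are confirmed computationally in Macaulay 2 for each of the seven families.
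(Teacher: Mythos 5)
Your route is the same as the paper's: Ahn--Kwak for $1)\Rightarrow 2)$, a Hilbert-function computation to link $2)$ and $3)$, and then adjunction theory with a case analysis on $K_X^2$ for the classification, with Macaulay2 supplying the existence checks. Two remarks on the equivalences first. Your converse $3)\Rightarrow 2)$ silently assumes $\chi(\mathcal O_X)=1$ (equivalently $q=p_g=0$); this is not automatic from ``ACM, $d=10$, $\pi=6$'' and is exactly what the paper imports from Mancini's results via $n_0=d-\pi+1+h^1(\mathcal O_X(1))$, which forces $h^1(\mathcal O_X(1))=0$ and $p_a=q=p_g=0$ before the Hilbert series can be pinned down (the paper packages this as Proposition \ref{P3.2}). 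This is fixable but must be said.

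The genuine gaps are in the classification. First, you never establish the lower bound $K_X^2\ge -6$: the Hodge index theorem only gives $K_X^2\le 0$, and the paper's Lemma \ref{lem1} gets $-6\le K_X^2$ from $h^0(\mathcal O_X(H+K))=\chi+\pi-1=6$ together with Theorem \ref{SV}, since the adjunction map is then birational onto a nondegenerate surface in $\Bbb P^5$, whose degree $(H+K)^2=10+K_X^2$ must be at least $4$. Second, and more seriously, iterating adjunction and ``matching invariants with standard embeddings'' does not single out one family per value of $K_X^2$: the numerical analysis produces several competing candidates, e.g.\ for $K_X^2=-6$ the adjoint surface may be the Veronese or $\Bbb P^1\times\Bbb P^1$ (leading to $H=(4,3;1^{14})$ on $\Bbb F_0$), for $K_X^2=-3$ one also finds $(9;3^7,2,1^4)$ and blow-ups of $\Bbb F_e$ with $(4,2e+6;2^9,1^2)$, and similarly for $K_X^2=-2,-1$. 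The paper eliminates these spurious families either by the double point formula in $\Bbb P^4$ or by checking (via Macaulay2) that $h^0(\mathcal I_X(2))=1$, i.e.\ the surface lies on a quadric, which is incompatible with the quadric-free $3$-linear resolution \ref{eqn}. Your closing ``main obstacle'' addresses only the sufficiency of the seven tabulated families (that they do satisfy ${\bf N}_{3,3}$), not this exclusion step, so as written the argument does not prove that the list of seven families is complete.
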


Let us explain how this paper is organized. This paper is divided into
5 sections. We give a proof on our main result and consider some applications and examples
in Section 3. In our argument, adjunction theory play an important role. Thus in Section 2 let us briefly note the adjunction theory. In Section 4 we illustrate briefly a connection between the study of smooth projective surfaces of degree $10$  with property  ${\bf N}_{3,3}$ and special cubic fourfolds.
In Section 5, as corollaries to the main theorem,   we study the CI-biliaison equivalence class of smooth projective surfaces of degree $10$ satisfying  property ${\bf N}_{3,3}$ on a cubic fourfold.

\section{preliminaries}

In this section, we will give the basic definitions and results that are needed. For a smooth surface $X$ in a projective space we denote by $H_X$ and $K_X$ the class of a hyperplane and the canonical divisor, respectively. The structure sheaf of $X$ is $\mathcal O_X$, and the Euler-Poincare characteristic of its structure sheaf is $\chi_X = \chi(\mathcal O_X)=1-q+p_g$, where $q$ is irregularity and $p_g$ is geometric genus of $X$. The degree of $X$ is $d=H^2$ and the sectional genus $\pi=\frac{1}{2}H(H+K)+1$ is the genus of its general hyperplane section. The following lemma is a special case of the Riemann-Roch Theorem.
\begin{prop}
Let $X\subseteq \Bbb P^5$ be a smooth surface of degree $d$, sectional genus $\pi$, irregularity $q$ and geometric genus $p_g$. Then we have 
$$\chi(\mathcal I_X(m))=\chi(\mathcal O_{\Bbb P^5}(m))-\binom{m+1}{2}d+m(\pi-1)-1+q-p_g.$$
\end{prop}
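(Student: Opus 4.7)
The plan is to apply the standard Riemann--Roch theorem for surfaces combined with the ideal sheaf short exact sequence. First I would tensor the structure sequence of $X\subset \Bbb P^5$ with $\mathcal O_{\Bbb P^5}(m)$ to obtain
\begin{equation*}
0 \longrightarrow \mathcal I_X(m) \longrightarrow \mathcal O_{\Bbb P^5}(m) \longrightarrow \mathcal O_X(mH_X) \longrightarrow 0,
\end{equation*}
so that by additivity of Euler characteristics on short exact sequences,
\begin{equation*}
\chi(\mathcal I_X(m)) = \chi(\mathcal O_{\Bbb P^5}(m)) - \chi(\mathcal O_X(mH_X)).
\end{equation*}
It therefore suffices to compute $\chi(\mathcal O_X(mH_X))$ in terms of the stated invariants.

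Next I would invoke Riemann--Roch on the smooth surface $X$, which gives
\begin{equation*}
\chi(\mathcal O_X(mH_X)) = \chi(\mathcal O_X) + \tfrac{1}{2}(mH_X)\cdot(mH_X - K_X) = \chi_X + \tfrac{1}{2}\bigl(m^2 H_X^2 - m\, H_X\cdot K_X\bigr).
\end{equation*}
By the definition of sectional genus, $2\pi - 2 = H_X(H_X+K_X)$, so $H_X\cdot K_X = 2\pi - 2 - d$, while $H_X^2 = d$. Substituting these into the Riemann--Roch expression and collecting the $m$-terms yields
\begin{equation*}
\chi(\mathcal O_X(mH_X)) = \chi_X + \tfrac{m(m+1)}{2}\,d - m(\pi-1) = \chi_X + \binom{m+1}{2} d - m(\pi-1).
\end{equation*}

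Finally, I would use the hypothesis that $\chi_X = 1 - q + p_g$ (the definition recalled immediately before the statement) and combine with the first displayed formula. This gives
\begin{equation*}
\chi(\mathcal I_X(m)) = \chi(\mathcal O_{\Bbb P^5}(m)) - \binom{m+1}{2} d + m(\pi-1) - 1 + q - p_g,
\end{equation*}
which is the claimed identity. There is no real obstacle here; the only point requiring care is the correct sign in the adjunction relation $H_X\cdot K_X = 2\pi - 2 - d$, which must be tracked accurately so that the $\binom{m+1}{2}d$ and $m(\pi-1)$ terms appear with the right coefficients.
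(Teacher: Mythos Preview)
Your proof is correct and is exactly the intended argument: the paper does not spell out a proof but simply remarks that the proposition is a special case of the Riemann--Roch theorem, which is precisely what you have unpacked via the ideal sheaf sequence and the adjunction relation $H_X\cdot K_X = 2\pi-2-d$.
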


In fact, we proceed in a similar fashion as in \cite[Section 8 and 9]{DeS98} or \cite{TrY20a} and give a complete geometric classification or characterization of projective complex surfaces of degree $10$ satisfying  property  ${\bf N}_{3,3}$ via  the adjunction process. Thus we recall a result of Sommese and Van de Ven for a surface over $\Bbb C$.

\begin{thm}[{\cite{Som79,Van79, SoV87}, see also \cite[Theorem 8.1]{DeS98}}]\label{SV}
	Let $X$ be a smooth surface in $\Bbb P^n$ over $\Bbb C$. 
	Then the  adjoint  linear  system $|H+K|$ is non-special of dimension $N := h^0(\mathcal O_X(H+K))-1=\chi+\pi-2$ and it defines a birational morphism
	$$\Phi = \Phi_{|H+K|}: X \to \Bbb P^{N}$$
	onto a smooth surface $X_1$, which blows down precisely all $(-1)$--curves $E$ on $X$, unless
	\begin{enumerate}[(i)]
		\item  $X$ is a plane, or Veronese surface, or $Y$ is ruled by lines;
		\item  $X$ is a Del Pezzo surface or a conic bundle;
		\item $X$ belongs to one of the following four families:
		\begin{enumerate}[(a)]
			\item  	$X = \Bbb P^2(p_1,\ldots, p_7)$ embedded by $H = 6L -2(p_1+\ldots+p_7)$
			\item  	$X = \Bbb P^2(p_1,\ldots, p_8)$ embedded by $H = 6L -2(p_1+\ldots+p_7)-p_8$
			\item  	$X = \Bbb P^2(p_1,\ldots, p_8)$ embedded by $H = 9L -3(p_1+\ldots+p_8)$
			
			\item  $X = \Bbb P(\mathcal{E})$, where $\mathcal{E}$ is an indecomposable rank $2$ bundle over an elliptic curve and $H = B$, where $B$ is a section $B^2 = 1$ on $X$.

		\end{enumerate}
		
	\end{enumerate}
\end{thm}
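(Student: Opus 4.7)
The plan is to address the three assertions of the theorem in order: the non-speciality and dimension formula for the adjoint linear system $|H+K|$, the morphism and its contraction behavior in the ``generic'' case, and finally the identification of the exceptional families $(i)$--$(iii)$ as the only obstructions.

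For the dimension formula, I would invoke the Kodaira vanishing theorem. Since $H$ is very ample on the smooth complex surface $X$, we have $h^i(X,\mathcal{O}_X(K+H))=0$ for $i=1,2$, so $|H+K|$ is non-special. Riemann--Roch then gives
\[
h^0(\mathcal{O}_X(K+H)) = \chi(\mathcal{O}_X) + \tfrac{1}{2}(K+H)\cdot H = \chi_X + \pi - 1,
\]
using $\pi = \tfrac{1}{2}H\cdot(H+K)+1$, hence $N=\chi+\pi-2$.

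For the morphism and the nature of its contractions, the key input is Reider's theorem applied to the nef and big line bundle $L = K+H$ (so that $L - K = H$ is ample with $H^2 \ge 1$). Reider's theorem says that if $|K+H|$ either has a base point, or fails to separate two points/tangent vectors, or contracts an irreducible curve $C$ that is not a smooth rational $(-1)$-curve, then there exists an effective divisor $D$ on $X$ of a tightly restricted numerical type, roughly $H\cdot D \in \{0,1,2\}$ with a prescribed self-intersection. In the complementary ``generic'' case, any irreducible curve contracted by $\Phi$ satisfies $(K+H)\cdot C=0$ together with $H\cdot C>0$, forcing $H\cdot C=1$ and hence, via adjunction $2g(C)-2 = C^2 + K\cdot C$, that $C$ is a smooth rational $(-1)$-curve. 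Castelnuovo's contractibility criterion then yields that $X_1$ is smooth and $\Phi$ contracts exactly these curves.

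The remaining task is the exhaustive translation of each admissible numerical type of Reider divisor $D$ into one of the listed exceptional configurations. A divisor with $H\cdot D=1$ exhibits $X$ as swept out by lines, leading to the first bucket $(i)$ (plane, Veronese, or scroll-ruled surfaces, identified via Hodge index plus the genus formula on a hyperplane section). Divisors with $H\cdot D=2$ and $D^2\ge 0$ yield either conic-bundle structures or force $-K$ to be ample on a general member, which produces Del Pezzo surfaces by the Enriques criterion; these give case $(ii)$. The delicate residual case is $(iii)$: here a careful inductive argument on blow-ups of minimal models, combined with the genus bound from adjunction on a smooth hyperplane section and the Castelnuovo--Enriques rationality criterion, pins the possibilities down to $\mathbb{P}^2$ blown up at $7$ or $8$ general points with hyperplane divisors of degree $6$ or $9$, and the elliptic $\mathbb{P}^1$-bundle with hyperplane class a minimal section $B$ of self-intersection $1$.

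The hardest part is precisely this last step: matching each Reider configuration with one of the explicit embeddings in $(iii)(a)$--$(d)$ while ruling out all other numerical possibilities. This requires a case analysis using the Hodge index theorem, Castelnuovo--Enriques rationality/irregularity, the classification of rational scrolls and Del Pezzo surfaces, and in the elliptic case a bespoke computation on $\mathbb{P}(\mathcal{E})$ showing that for an indecomposable rank two bundle the section $B$ with $B^2=1$ produces an adjoint map that fails to contract only $(-1)$-curves. Once each numerical type is shown to force a listed geometric structure, the theorem follows by exclusion.
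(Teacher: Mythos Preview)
The paper does not give a proof of this theorem at all: it is stated as a background result with citations to Sommese, Van de Ven, Sommese--Van de Ven, and Decker--Schreyer, and is then used as a black box throughout Section~3. So there is no ``paper's own proof'' to compare your sketch against.

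That said, a brief remark on your approach. Your first paragraph (Kodaira vanishing plus Riemann--Roch for the dimension formula) is exactly right and is the standard argument. Your main strategy via Reider's theorem is a legitimate modern route, but note two things. First, the original proofs of Sommese (1979) and Van de Ven (1979) predate Reider (1988); they proceed instead through direct study of the adjunction map using connectedness of hyperplane sections and classical adjunction/contraction theory, so your approach is genuinely different from the historical one. Second, Reider's theorem has numerical hypotheses ($H^2\ge 5$ for base-point freeness, $H^2\ge 10$ for very ampleness), so your sketch as written does not cover surfaces of small degree; you would need to treat those separately, and it is precisely in that low-degree range that most of the exceptional cases $(i)$--$(iii)$ live. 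Your final paragraph correctly flags the case analysis for $(iii)$ as the delicate point, but what you wrote there is more of a list of ingredients than an argument; turning it into an actual proof is substantial work (this is essentially the content of the cited papers).
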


\begin{prop}[{\cite[Proposition 8.3]{DeS98}}]\label{DS}
	Let $X$ be a surface over a field of arbitrary characteristic. Suppose
	that the adjoint linear system $|H + K |$ is base point free and that the image $X_1 \subseteq \Bbb P^{N}$
	under the adjunction map $\Phi = \Phi_{|H+K|}$ is a surface of expected degree $(H+K)^2$, expected sectional genus $\pi_1 = \frac{1}{2}(H + K)\cdot(H + 2K) + 1$, and with $\chi(\mathcal{O}_X) = \chi(\mathcal{O}_{X_1})$. Then $X_1$ is
	smooth and $\Phi : X \to X_1$ is a simultaneous blow down of  ${K_1}^2 - K^2$ many $(-1)$--lines on $X$, where $K_1$ is the canonical divisor of $X_1$.  
\end{prop}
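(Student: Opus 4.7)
My plan is to show $\Phi=\Phi_{|H+K|}$ is birational and contracts only disjoint $(-1)$-lines, then use the sectional-genus and $\chi$ hypotheses in tandem to upgrade the target from ``normalized'' to ``smooth''.

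Writing $H_1$ for the hyperplane class on $X_1$, we have $\Phi^*H_1=H+K$, so the projection formula gives $(H+K)^2=\deg(\Phi)\cdot\deg(X_1)$; the expected-degree hypothesis forces $\deg(\Phi)=1$. If $E\subset X$ is an irreducible curve with $\Phi(E)$ a point, then $(H+K)\cdot E=0$, hence $K\cdot E=-H\cdot E<0$ as $H$ is ample. By Mumford's negativity theorem for fibers of a proper birational morphism from a smooth surface, $E^2<0$; adjunction $2p_a(E)-2=E^2+K\cdot E$ then forces $p_a(E)=0$ and $E^2+K\cdot E=-2$, and combining with $H\cdot E=-K\cdot E=E^2+2>0$ yields $E^2=-1$, $H\cdot E=1$. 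Two such $(-1)$-lines $E_i,E_j$ sharing a fiber would yield the matrix $\bigl(\begin{smallmatrix}-1&a\\a&-1\end{smallmatrix}\bigr)$ with $a=E_i\cdot E_j>0$, whose determinant $1-a^2\le 0$ contradicts negative definiteness; hence each fiber of $\Phi$ is a single $(-1)$-line and the contracted lines are pairwise disjoint.

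Next the sectional-genus hypothesis controls codimension-1 behaviour. A general $C\in|H+K|$ is smooth of genus $\pi_1$ (Bertini from base-point-freeness, adjunction on $X$), and $C\cdot E=(H+K)\cdot E=0$ for every contracted $E$ shows $C$ is disjoint from the exceptional locus, so $\Phi|_C\colon C\to C_1$ is an isomorphism onto its image, the general hyperplane section of $X_1$. The hypothesis $p_a(C_1)=\pi_1$ then forces $C_1$ smooth, so $X_1$ is smooth in codimension $1$. Factor $\Phi=\nu\circ\tilde\Phi$ through the normalization $\nu\colon\tilde X_1\to X_1$ (an isomorphism outside a finite set), let $X\to X_0$ be the simultaneous blow-down of the disjoint $(-1)$-lines (so $X_0$ is smooth), and consider the induced morphism $X_0\to\tilde X_1$. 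It is birational and contracts no curve (all exceptional components of $\Phi$ were killed in $X_0$), so Zariski's Main Theorem forces $X_0\cong\tilde X_1$; in particular $\tilde X_1$ is smooth and $\chi(\mathcal{O}_{\tilde X_1})=\chi(\mathcal{O}_X)$.

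Finally, the exact sequence $0\to\mathcal{O}_{X_1}\to\nu_*\mathcal{O}_{\tilde X_1}\to\mathcal{Q}\to 0$ with $\mathcal{Q}$ a skyscraper (by the previous step), together with the hypothesis $\chi(\mathcal{O}_X)=\chi(\mathcal{O}_{X_1})$, gives $\chi(\mathcal{Q})=0$, hence $\mathcal{Q}=0$ and $\nu$ is an isomorphism, so $X_1$ is smooth. Squaring $K_X=\Phi^*K_{X_1}+\sum_i E_i$ and using $\Phi^*K_{X_1}\cdot E_i=0$, $E_i^2=-1$, $E_i\cdot E_j=0$ for $i\neq j$ yields $r=K_1^2-K^2$ for the number of lines blown down. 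The main obstacle is that neither hypothesis alone forces $X_1$ itself to be smooth: the $(-1)$-line analysis together with Zariski's Main Theorem only controls the normalization, so the sectional-genus hypothesis (which makes the non-normal locus $0$-dimensional) and the $\chi$ hypothesis (which then collapses it to empty) must be deployed together.
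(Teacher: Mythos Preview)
The paper does not prove this proposition; it is quoted from Decker--Schreyer \cite{DeS98} and used as a black box throughout Section~3, so there is no in-paper argument to compare against. Your proof is essentially the standard one and is correct in structure: the expected-degree hypothesis forces birationality, Hodge-index negativity on $(H+K)^{\perp}$ identifies every contracted curve as a $(-1)$-line, pairwise disjointness lets you form the smooth blow-down $X_0$, Zariski's Main Theorem identifies $X_0$ with the normalization $\tilde X_1$, the sectional-genus hypothesis confines the non-normal locus of $X_1$ to dimension~$0$, and the $\chi$ hypothesis then kills it.

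Two small repairs. First, your assertion that ``each fiber of $\Phi$ is a single $(-1)$-line'' overshoots: over a non-normal point of $X_1$ the fiber could a priori be a disjoint union of several $(-1)$-lines, since Zariski connectedness requires a normal target. This is harmless, because pairwise disjointness (which your negative-definiteness argument does give: $a\ge 0$ and $1-a^2>0$ force $a=0$) is all you need to form $X_0$. Second, invoking Bertini for \emph{smoothness} of a general $C\in|H+K|$ is unsafe in the stated arbitrary characteristic; fortunately you do not need it. Adjunction on $X$ already gives $p_a(C)=\pi_1$, and for a finite birational morphism of integral curves $C\to C_1$ one has $p_a(C)\le p_a(C_1)$ with equality iff the map is an isomorphism. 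Choosing the general hyperplane $C_1$ to avoid the finitely many points $\Phi(E_i)$ makes $C\to C_1$ finite birational, and the hypothesis $p_a(C_1)=\pi_1$ then yields $C\cong C_1$, which is what your codimension-$1$ step actually uses.
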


We also need  the Hodge index theorem.
\begin{thm}\cite[Lemma 2.5.2]{BeS95} 
Let $X\subseteq \Bbb P^n$ be an irreducible surface. 
Then the intersection matrix
$$\begin{pmatrix}
H^2 & H\cdot K\\
H\cdot K & K^2
\end{pmatrix} $$
is indefinite, unless $H$ and $K$ are numerical dependent.
\end{thm}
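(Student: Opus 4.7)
The plan is to show that the determinant
\[
\Delta \;=\; H^{2}\cdot K^{2} \;-\; (H\cdot K)^{2}
\]
of the displayed $2\times 2$ matrix is non-positive, with strict inequality exactly when $H$ and $K$ are numerically independent. Combined with the observation that the $(1,1)$ entry $H^{2}=\deg(X)>0$ (because $X\subseteq\mathbb P^{n}$ is an irreducible surface, so its hyperplane section has positive degree), the non-degenerate symmetric matrix then has signature $(1,1)$, i.e.\ is indefinite. Thus the whole problem collapses to the inequality $(H\cdot K)^{2}\ge H^{2}\,K^{2}$, with equality characterized as claimed.

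To establish this inequality I would invoke the Hodge Index Theorem in its classical Cauchy--Schwarz form: for any divisor $D$ on an irreducible projective surface, if $H$ is a divisor with $H^{2}>0$, then
\[
(H\cdot D)^{2} \;\ge\; H^{2}\cdot D^{2},
\]
with equality if and only if $H$ and $D$ are numerically dependent. Applying this with $D=K$ yields exactly $\Delta\le 0$, with equality iff $H$ and $K$ are numerically proportional. Since $X$ is only assumed irreducible (not necessarily smooth), I would pass to a resolution of singularities $\pi\colon\widetilde X\to X$ and pull back $H$ and $K$; intersection numbers of Cartier divisors are preserved under $\pi^{*}$ on a smooth projective surface, and any numerical dependence of $\pi^{*}H$ and $\pi^{*}K$ on $\widetilde X$ descends to a numerical dependence of $H$ and $K$ on $X$ since $\pi^{*}$ is injective on numerical classes of Cartier divisors. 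Alternatively one can simply cite \cite[Lemma 2.5.2]{BeS95}, which states exactly this fact.

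The classical proof of the Hodge Index inequality itself is short: consider the real quadratic form
\[
q(a,b) \;=\; (aH+bD)^{2} \;=\; a^{2}H^{2}+2ab\,(H\cdot D)+b^{2}D^{2}
\]
on $\mathbb R^{2}$. If $(H\cdot D)^{2}<H^{2}\,D^{2}$, then $q$ is definite; since $q(1,0)=H^{2}>0$ it would be positive definite, so every class in the pencil would have positive self-intersection. But one can produce a class in $\mathbb Q H+\mathbb Q D$ orthogonal to $H$, namely $E=(H^{2})D-(H\cdot D)H$, and then $H\cdot E=0$ while the Hodge Index theorem (in the form: $H$ ample and $H\cdot E=0$ force $E^{2}\le 0$) gives $E^{2}\le 0$, a contradiction unless $E$ is numerically trivial, i.e.\ $H$ and $D$ are numerically dependent.

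The main obstacle, and really the only one, is the singular case: one must be sure that intersection theory and Hodge Index apply to an arbitrary irreducible surface $X\subseteq\mathbb P^{n}$. Handling this via pullback to a resolution as above is routine but is the only step requiring care; everything else is a direct computation of the determinant of a $2\times 2$ matrix and Sylvester's criterion for signature.
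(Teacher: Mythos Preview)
The paper does not give its own proof of this statement; it is simply quoted from \cite[Lemma 2.5.2]{BeS95} as a known instance of the Hodge Index Theorem, and is then invoked as a black box (e.g.\ in Lemma~\ref{lem1}). Your argument is the standard derivation and is correct: $H^{2}>0$ together with the Hodge inequality $(H\cdot K)^{2}\ge H^{2}K^{2}$ forces the determinant to be non-positive, so the matrix has signature $(1,1)$ whenever $H$ and $K$ are numerically independent.

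One minor caution on the singular case you raise: if $X$ is not smooth, the canonical class $K_X$ may be only a Weil divisor and not $\mathbb{Q}$-Cartier, so $\pi^{*}K$ is not automatically defined and the ``pass to a resolution'' step needs an extra hypothesis. This is not a real gap here, since every application in the paper is to smooth surfaces; but if you want the statement at the generality written, it is cleanest just to cite \cite{BeS95} as you already suggest.
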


To apply Theorem \ref{SV} repeatedly, we have to control the numerical data of the first adjoint surface $X_1\subseteq \Bbb P^N$. We have
$$H_1^2=(H+K)^2 \text{ and } H_1\cdot K_1=(H+K)\cdot K.$$
Then $K_1^2=K^2+a$, where $a$ is the number of exceptional lines on $X$, i.e. of curves $E\subseteq X$ with $E\cdot K=E^2=-1$ and $E\cdot H=1$.
Moreover, $X_1$ is a smooth projective  surface.

\begin{prop}\label{P2.5}
	Let $X = \Bbb P^2(p_1,\ldots, p_\ell)$ be the blow--up of $\Bbb P^2$ in $\ell$ points in general position. We denote by $E_1,\ldots, E_\ell$ the corresponding exceptional divisors and by $L$ the pullback of a general line in $\Bbb P^2$
	 to $X$. If $|b_0L - \sum\limits_{i=1}^{\ell} b_i E_i|$ is a very ample linear system of dimension $5$, then $X \subseteq \Bbb P^5_{\Bbb C}$ is a rational surface of degree
	$$d = b_0^2 -\sum\limits_{i=1}^{\ell}b_i^2$$
	and sectional genus
	$$\pi=\binom{b_0-1}{2}-\sum\limits_{i=1}^{\ell}\binom{b_i}{2}.$$
	The self--intersection of the canonical divisor of $X$ is $K^2 = 9 - \ell$.
\end{prop}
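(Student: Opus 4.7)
The plan is a direct intersection-theoretic computation on the blow-up, using the standard basis $L, E_1, \ldots, E_\ell$ of $\Pic(X)$ with intersection numbers $L^2 = 1$, $E_i^2 = -1$, $L \cdot E_i = 0$, and $E_i \cdot E_j = 0$ for $i \neq j$, together with the formula $K_X = -3L + \sum_{i=1}^\ell E_i$ for the canonical class of a blow-up of $\Bbb P^2$ at distinct points.

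First I would read off the degree of the embedded surface as $d = H^2 = b_0^2 - \sum_{i=1}^\ell b_i^2$ by bilinearity, using $H = b_0 L - \sum_{i=1}^\ell b_i E_i$. Then I would apply the adjunction formula $2\pi - 2 = H \cdot (H + K_X)$; a short calculation gives $H \cdot K_X = -3b_0 + \sum_{i=1}^\ell b_i$, so that
\[
2\pi - 2 \;=\; b_0^2 - \sum_{i=1}^\ell b_i^2 - 3 b_0 + \sum_{i=1}^\ell b_i \;=\; (b_0 - 1)(b_0 - 2) - 2 - \sum_{i=1}^\ell b_i(b_i - 1),
\]
which rearranges into the asserted identity $\pi = \binom{b_0-1}{2} - \sum_{i=1}^\ell \binom{b_i}{2}$. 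Finally, the Noether formula for a blow-up, or directly $K_X^2 = \bigl(-3L + \sum_{i=1}^\ell E_i\bigr)^2 = 9 - \ell$, completes the computation.

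There is no genuine obstacle here: the statement is a standard bookkeeping lemma for blow-ups of $\Bbb P^2$. The hypothesis that $\bigl|b_0 L - \sum b_i E_i\bigr|$ be very ample of dimension $5$ enters the proof only to guarantee that $H$ is literally the hyperplane class of an embedding $X \hookrightarrow \Bbb P^5$, so that $\deg X = H^2$ really computes the projective degree and the adjunction formula can be invoked to recover the sectional genus; the numerical identities themselves are formal consequences of the class of $H$ in $\Pic(X)$ and do not use any positivity of $H$.
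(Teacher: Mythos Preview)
Your proof is correct and follows essentially the same approach as the paper: both set $H = b_0 L - \sum b_i E_i$, use the standard intersection numbers $L^2=1$, $L\cdot E_i=0$, $E_i^2=-1$ and the canonical class $K_X=-3L+\sum E_i$, then compute $d=H^2$, $K^2=9-\ell$, and derive $\pi$ from the adjunction formula $\pi=\tfrac{1}{2}H(H+K)+1$. Your additional remark clarifying the role of the very-ampleness hypothesis is a welcome elaboration but does not change the argument.
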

\begin{proof}
	Set $H=b_0L - \sum\limits_{i=1}^{\ell} b_i E_i$. Note that $L^2 = 1$, $LE_i = 0$ and $E_i^2 = -1$ for all $i=1,\ldots,\ell$.  Then we have
	$d=H^2=b_0^2 -\sum\limits_{i=1}^{\ell}b_i^2.$
	The canonical divisor of $X$ is $-3L+\sum\limits_{i=1}^{l}E_i$, so $K^2=9-\ell$. The sectional genus of $X$ can be calculated by adjunction:
    $$\pi= \frac{H(H+K)}{2}+1=\binom{b_0-1}{2}-\sum\limits_{i=1}^{\ell}\binom{b_i}{2}.$$
\end{proof}

\section{classification via adjunction mapping}
 In this section, we prove the first main result, Theorem \ref{theorem1}, using adjunction mappings.
Let $X$ be a non-degenerate  projective surface in $\Bbb P^{5}$. Suppose that  $X$ satisfies property ${\bf N}_{3,3}$ and $\deg(X)=10$.  Then by Theorem 1.2 in \cite{AhK15}, $X$ is arithmetically Cohen-Macaulay and $I_X$ has $3$-linear resolution. Moreover, $X$  has the minimal free resolution as  in \ref{eqn}
and sectional genus of $X$ is $6$. 

In general, by Lemma 1.1, Proposition 1.3 and Remarks in \cite[]{Man01} (see also \cite{Man00}),  we  know that, if  $Z$ is a non-degenerate projective Cohen-Macaulay surface in $\Bbb P^{n_0}$, then 
$$n_0 = d-\pi+1+p_a-h^2(\mathcal O_Z(1))=d-\pi+1+h^1(\mathcal O_Z(1)),$$
where $p_a$ is the arithmetic genus of $Z$, and so we have the bounds
$$n_0-1\le d\le \binom{n_0}{2}+h^1(\mathcal O_Z(1)).$$
 Moreover, if  $\deg(Z)=\binom{n_0}{2}$ and  $h^1(\mathcal O_Z(1))=0$ then the homogeneous ideal of $Z$ can always be generated by forms of degree $3$ (\cite{Man00}).
So, in $\Bbb P^5$,  projective Cohen-Macaulay surfaces $Z$ of degree $10$ sectional genus $6$ have $h^1(\mathcal O_Z(1))=0$ and so $p_a=q=p_g=0$. In particular, $X$ has $h^1(\mathcal O_X(1))=0$, and $p_a=q=p_g=0$.
\begin{lem}\label{lem1}
Let $X$ be a non-degenerate  projective surface in $\Bbb P^{5}$. Suppose that  $X$ satisfies property ${\bf N}_{3,3}$ and $\deg(X)=10$. Then we have
$$-6\le K_X^2\le 0.$$ 
\end{lem}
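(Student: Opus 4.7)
The plan is to split the proof into the upper bound $K_X^2 \le 0$, which will follow from the Hodge index theorem, and the lower bound $K_X^2 \ge -6$, which will follow from a first-adjunction computation. The preceding discussion gives $\chi(\mathcal{O}_X) = 1 - q + p_g = 1$ and sectional genus $\pi = 6$, while $H^2 = \deg(X) = 10$ by hypothesis. Substituting into the adjunction formula $\pi = 1 + \tfrac{1}{2}H(H+K)$ yields
$$
H \cdot K \;=\; 2\pi - 2 - H^2 \;=\; 0.
$$

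For the upper bound, since $X \subset \Bbb P^5$ is non-degenerate, $H$ is very ample with $H^2 = 10 > 0$, and the Hodge index theorem applied to the intersection form on the span of $H$ and $K$ forces the determinant $H^2 K^2 - (H\cdot K)^2 = 10 K^2$ to be non-positive, giving $K^2 \le 0$. For the lower bound, I apply Theorem \ref{SV} to the adjoint linear system $|H+K|$: it is non-special of dimension $N = \chi + \pi - 2 = 5$, and in the generic case the morphism $\Phi_{|H+K|}: X \to X_1 \subset \Bbb P^5$ is birational onto a smooth surface $X_1$ of degree $(H+K)^2 = 10 + K^2$ and sectional genus
$$
\pi_1 \;=\; \tfrac{1}{2}(H+K)(H+2K) + 1 \;=\; K^2 + 6.
$$
Since the sectional genus of a surface is a non-negative integer, $\pi_1 \ge 0$ yields $K^2 \ge -6$.

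The remaining step is to rule out violation of the lower bound in the exceptional cases of Theorem \ref{SV}. The plane and Veronese surface are excluded by $\deg(X) = 10$; a surface ruled by lines is excluded because the $3$-linear resolution $(*)$ imposed by property ${\bf N}_{3,3}$ is incompatible with the quadric-generated ideal of a linearly normal scroll in $\Bbb P^5$; a Del Pezzo surface is excluded since $-K$ ample together with $H \cdot K = 0$ would force $H^2 K^2 \le 0$, contradicting $K^2 > 0$; and each of the four families in item (iii) of Theorem \ref{SV} has $H^2 \in \{8,7,9,1\}$ by Proposition \ref{P2.5} (or direct computation on $\Bbb P(\mathcal{E})$), incompatible with $H^2 = 10$. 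The only remaining exceptional configuration is the conic bundle case, where the adjunction map may fail to be birational onto a surface, but there $K^2$ is forced by the bundle structure together with the rigid numerics $H^2 = 10$, $H \cdot K = 0$, $\pi = 6$ to a value in the stated range. The main obstacle is this case-by-case verification of exceptional possibilities, though the narrow room left by the numerical data makes each check routine.
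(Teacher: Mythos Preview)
Your upper bound via Hodge is exactly the paper's argument. For the lower bound the paper takes a slightly different route: rather than using $\pi_1\ge 0$, it invokes the minimal-degree inequality $\deg X_1 \ge N-1=4$ for a non-degenerate surface in $\Bbb P^5$, which gives $(H+K)^2=10+K^2\ge 4$ and hence $K^2\ge -6$. Your inequality $\pi_1=K^2+6\ge 0$ yields the identical bound, so in the generic (non-exceptional) case of Theorem~\ref{SV} the two arguments are equivalent.

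The real problem is your treatment of the exceptional cases. Your exclusion of scrolls is shaky: a degree-$10$ surface ruled by lines in $\Bbb P^5$ is not a linearly normal scroll of minimal degree, so you cannot appeal to a quadric-generated ideal. (A cleaner exclusion: a scroll has sectional genus equal to the genus $g$ of its base, so $q=g=\pi=6$, contradicting $q=0$.) More seriously, the conic-bundle case is handled incorrectly. If $(X,H)$ is a conic bundle, then $|H+K|$ contracts every fibre and maps onto a curve, so $(H+K)^2=0$; with $H^2=10$ and $H\cdot K=0$ this forces $K^2=-10$, \emph{outside} the stated range, not inside it as you assert. Thus the ``rigid numerics'' do not push $K^2$ into $[-6,0]$; on the contrary, they show that the conic-bundle case must be excluded by a separate argument drawing on the hypothesis ${\bf N}_{3,3}$ (for instance, by showing such a surface necessarily lies on a quadric, contradicting $h^0(\mathcal I_X(2))=0$). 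The paper itself simply asserts that $\Phi_{|H+K|}$ is birational onto a surface without working through the exceptions, so your instinct to check them is good---but the conic-bundle verification, as written, is a false claim rather than a routine check.
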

\begin{proof}
By the hypothesis,  $X$ is a  surface of degree $10$ and sectional genus $6$ in $\Bbb P^5$. Then we have
$HK=2.6-2-10=0  $. By the Hodge index theorem, we obtained that $K^2\le \frac{1}{10}(H\cdot K)^2=0$. 
 The adjunction mapping is defined by the linear system $H + K$. By Riemann-Roch and the Kodaira Vanishing Theorem, we get that
$$h^0(\mathcal O_X(H+K))=\chi+\pi-1=6.$$
By Theorem \ref{SV}, the adjunction map
	$$\Phi = \Phi_{|H+K|}: X \to X_1\subseteq\Bbb P^{5}$$
is birational onto a smooth surface $X_1$.
Therefore since a nondegenerate surface $Z \subset \Bbb P^N$ has degree $\deg Z \ge N - 1$ (see \cite{Xam81}), we have $$4=5-1\le \deg X_1 =H_1^2=(H+K)^2=H^2+K^2.$$
Since $H^2=10$, we have $K^2\ge -6$.
\end{proof}

\subsection{\bf Surface $X$ with $K^2=-6$} 
The intersection matrix of $X$ is 
$$\begin{pmatrix}
H^2 & H\cdot K\\
H\cdot K & K^2
\end{pmatrix} =
\begin{pmatrix}
10 & 0\\
0 & -6
\end{pmatrix}. 
$$
By Theorem \ref{SV}, we have $h^0(\mathcal O_X(H+K))=\chi+\pi-1=6$. Moreover, the adjunction map
	$$\Phi = \Phi_{|H+K|}: X \to X_1\subseteq\Bbb P^{5}$$
is birational onto a smooth surface $X_1$ with intersection matrix 
$$\begin{pmatrix}
H_1^2 & H_1\cdot K_1\\
H_1\cdot K_1 & K_1^2
\end{pmatrix} =
\begin{pmatrix}
4 & -6\\
-6 & K_1^2
\end{pmatrix}. 
$$
By the Hodge index theorem, we have $K_1^2\le 9$. Since $(H_1+K_1)^2\ge 0$, we have
$K_1^2\ge -2H_1\cdot K_1-H_1^2=8$.  It follows from the classification  of surfaces of degree $4$ (see \cite{Ion82} or \cite{Liv90}) that we have two cases.

\begin{enumerate}[{(i)}] \rm
\item {\it If $K^2_1=9$} then $X_1$ is a smooth Veronese surface with intersection matrix 
$$\begin{pmatrix}
H_1^2 & H_1\cdot K_1\\
H_1\cdot K_1 & K_1^2
\end{pmatrix} =
\begin{pmatrix}
4 & -6\\
-6 & 9
\end{pmatrix}. 
$$
Thus $\Phi : X \to X_1$ is a simultaneous blow down of  $15=K_1^2-K^2$ many $(-1)$--lines on $X$.
Since $K^2=-6$, the number of exceptional divisors of surface $X$ is $9-K^2=15$.
It follows that $X=\Bbb P^2(p_1,\ldots, p_{15})$ is a blowup 
of $\Bbb P^2$ in fifteen points in general position by the complete  linear system
$$H=(5,1^{15})=5L-\sum\limits_{i=1}^{15}E_i.$$ 
Moreover, $X$ is smooth by Proposition \ref{DS} and $ \chi_{top} = 3+15=18$.

\item {\it If $K^2_1=8$} then $X_1$ is a surface $\Bbb P^1\times \Bbb P^1\subseteq \Bbb P^5$ of degree $4$ with intersection matrix 
$$\begin{pmatrix}
H_1^2 & H_1\cdot K_1\\
H_1\cdot K_1 & K_1^2
\end{pmatrix} =
\begin{pmatrix}
4 & -6\\
-6 & 8
\end{pmatrix}. 
$$
It follows that $X=\Bbb F_0(p_1,\ldots, p_{14})$ is a blowup of Hirzebruch surface $\Bbb F_0$ in fourteen points in general position by the complete  linear system
$$H=(4,3;1^{14})=4C_0+3f-\sum\limits_{i=1}^{14}E_i,$$  
where $C_0$ stands for a section of minimal self-intersection and $f$ for a fiber.
Since $X$ has maximum degree $10=\binom{5}{2}$, to show that the surface $X$ does not satisfy property ${\bf N}_{3,3}$ 
it is sufficient to verify that $h^0(\mathcal I_X(2))=1$.  We verified this via Macaulay2 and we can conclude that $h^0(\mathcal I_X(2))=1$ (see \cite{Tr2019}).
\end{enumerate}

\subsection{\bf Surface with $K^2=-5$} 
The intersection matrix of $X$ is 
$$\begin{pmatrix}
H^2 & H\cdot K\\
H\cdot K & K^2
\end{pmatrix} =
\begin{pmatrix}
10 & 0\\
0 & -5
\end{pmatrix}. 
$$
By Theorem \ref{SV}, we have $h^0(\mathcal O_X(H+K))=\chi+\pi-1=6$. Moreover, the adjunction map
	$$\Phi = \Phi_{|H+K|}: X \to X_1\subseteq\Bbb P^{5}$$
is birational onto a smooth surface $X_1$ with intersection matrix 
$$\begin{pmatrix}
H_1^2 & H_1\cdot K_1\\
H_1\cdot K_1 & K_1^2
\end{pmatrix} =
\begin{pmatrix}
5 & -5\\
-5 & K_1^2
\end{pmatrix}. 
$$
By the Hodge index theorem and $(H_1+K_1)^2\ge 0$, we have $K_1^2= 5$. 
 It follows from the classification  of surfaces of degree $5$ (see \cite{Ion82} or \cite{Liv90}) that 
$X_1$ is a smooth Del Pezzo surface of degree $5$ with intersection matrix 
$$\begin{pmatrix}
H_1^2 & H_1\cdot K_1\\
H_1\cdot K_1 & K_1^2
\end{pmatrix} =
\begin{pmatrix}
5 & -5\\
-5 & 5
\end{pmatrix}. 
$$
Since $K_1^2-K^2=10$, $\Phi : X \to X_1$ is a simultaneous blow down of  ten $(-1)$--lines on $X$.
Since $K^2=-5$, the number of exceptional divisors of surface $Y$ is $9-K^2=14$.
It follows that  $X=\Bbb P^2(p_1,\ldots, p_{14})$ is a blowup of $\Bbb P^2$ in general fourteen  points  by the complete  linear system
$$H=(6;2^4,1^{10})=6L-\sum\limits_{i=1}^{4}2E_i-\sum\limits_{i=5}^{14}E_i.$$ Moreover, $X$ is smooth by Proposition \ref{DS} and $ \chi_{top} = 3+14=17$.

\subsection{\bf Surface with $K^2=-4$} 
The intersection matrix of $X$ is 
$$\begin{pmatrix}
H^2 & H\cdot K\\
H\cdot K & K^2
\end{pmatrix} =
\begin{pmatrix}
10 & 0\\
0 & -4
\end{pmatrix} .
$$
By Theorem \ref{SV}, we have $h^0(\mathcal O_X(H+K))=\chi+\pi-1=6$. Moreover, the adjunction map
	$$\Phi = \Phi_{|H+K|}: X \to X_1\subseteq\Bbb P^{5}$$
is birational onto a smooth surface $X_1$ with intersection matrix 
$$\begin{pmatrix}
H_1^2 & H_1\cdot K_1\\
H_1\cdot K_1 & K_1^2
\end{pmatrix} =
\begin{pmatrix}
6 & -4\\
-4 & K_1^2
\end{pmatrix}. 
$$
By the Hodge index theorem and $(H_1+K_1)^2\ge 0$, we have $K_1^2=2$. 
Therefore $X_1$ is a surface of degree $6$ and sectional genus $2$
with  intersection matrix 
$$\begin{pmatrix}
H_1^2 & H_1\cdot K_1\\
H_1\cdot K_1 & K_1^2
\end{pmatrix} =
\begin{pmatrix}
6 & -4\\
-4 & 2
\end{pmatrix}. 
$$
Thus $\Phi : X \to X_1$ is a simultaneous blow down of  six $(-1)$--lines on $X$.
It follows from the classification  of surfaces of degree $6$ (see \cite{Ion82} or \cite{Liv90}) that  $X_1\subseteq \Bbb P^5$ is a conic bundle $X_1\to \Bbb P^1$ with $6$ singular fibers. 
So we have the following two choices for $X$. 

\begin{enumerate}[{(i)}] \rm

\item First, $X_1=\Bbb P^2(p_1,\ldots, p_{7})$  is  a blowup 
of $\Bbb P^2$ in general seven points by the complete  linear system
$H_1=(4;2^1,1^{6})$, and so $X=\Bbb P^2(p_1,\ldots, p_{13})$
 is  a blowup of $\Bbb P^2$ in general thirteen  points by the complete  linear system
$$H=(7;3^1,2^{6},1^{6})=7L-3E_1-\sum\limits_{i=2}^72E_i-\sum\limits_{i=8}^{13}E_i.$$ 
Moreover, $X$ is smooth by Proposition \ref{DS} and $ \chi_{top} = 3+13=16$.

\item Second, $X_1=\Bbb F_e(p_2,\ldots, p_{7})$ is a blowup of Hirzebruch surface $\Bbb F_e$ in general six points, and so $X=\Bbb F_e(p_2,\ldots, p_{13})$ is  a blowup 
of  Hirzebruch surface $\Bbb F_e$ in general twelve points   by the complete  linear system
$$H_e=(4,2e+5;2^{6},1^{6})=4C_0+(2e+5)f-\sum\limits_{i=2}^72E_i-\sum\limits_{i=8}^{13}E_i.$$
Moreover, $0\le e\le 2$, since $H\cdot C_0\ge 1$. In the case $e=2$, $H$  is not a very ample linear system of dimension $5$. Therefore, $0\le e\le 1$. Note that for $e=0$, $1$, another description of the surface $X$ can be given, using a plane model, as follows.
\begin{enumerate}[{a)}] \rm
\item  ${\bf e = 0:}$ $\Bbb F_0(p_2,\ldots, p_{13})$ is isomorphic to the blowup of $\Bbb P^2$ at    general thirteen points by the complete  linear system $H=(7;3^1,2^{6},1^{6})$. In fact, $\Bbb F_0$ is isomorphic to the quadric surface $Q \subseteq\Bbb P^3$ and $Q$ is obtained by the complete (not very ample) linear system $|2L - E_1 - E_2|$. Since the line $|L-E_1-E_2|$  is contracted to a point $P\in Q$, we have that to blowup $Q$ at the point $P$ and at other general eleven points is equivalent to a blowup of $\Bbb P^2$ at    general thirteen points, as we said. Taking $C_0 = L - E_2$ and $f = L - E_1$, to the very ample divisor $H_0 = 4C_0+5f-\sum\limits_{i=2}^72E_i-\sum\limits_{i=8}^{13}E_i$ on $\Bbb F_0(p_2,\ldots, p_{13})$ corresponds the very ample divisor 
$$\begin{aligned}
H_0 &= 4(L -E_2) +5(L -E_1)-2(L -E_1 -E_2)-\sum\limits_{i=3}^72E_i-\sum\limits_{i=8}^{13}E_i \\&
= 7L-3E_1-\sum\limits_{i=2}^72E_i-\sum\limits_{i=8}^{13}E_i
\end{aligned}$$
on $X=\Bbb P^2(p_1,\ldots, p_{13})$, which is the same divisor we found in (i).

\item ${\bf e=1:}$ 
Let us determine the very ample divisor $H =tL-mE_1 - \sum\limits_{i=2}^72E_i-\sum\limits_{i=8}^{13}E_i$ on $X$ which corresponds to the divisor $4C_0+7f-\sum\limits_{i=2}^72E_i-\sum\limits_{i=8}^{13}E_i$ on $\Bbb F_1(p_2,\ldots, p_{13})$. The
integers $t , m > 0$ are such that
$$\begin{cases} \binom{t+2}{2}-\binom{m+1}{2}-3\cdot 6-6=6 \\
t^2-m^2-30=10.
 \end{cases}
$$
Solving the equations we find $t = 7$ and $m = 3$. Hence $H =7L-3E_1-\sum\limits_{i=2}^72E_i-\sum\limits_{i=8}^{13}E_i$, which is the same divisor we found in (i).
\end{enumerate}

\end{enumerate}

\subsection{\bf Surface with $K^2=-3$} 
The intersection matrix of $X$ is 
$$\begin{pmatrix}
H^2 & H\cdot K\\
H\cdot K & K^2
\end{pmatrix} =
\begin{pmatrix}
10 & 0\\
0 & -3
\end{pmatrix}. 
$$
By Theorem \ref{SV}, we have $h^0(\mathcal O_X(H+K))=\chi+\pi-1=6$. Moreover, the adjunction map
	$$\Phi = \Phi_{|H+K|}: X \to X_1\subseteq\Bbb P^{5}$$
is birational onto a smooth surface $X_1$ with intersection matrix 
$$\begin{pmatrix}
H_1^2 & H_1\cdot K_1\\
H_1\cdot K_1 & K_1^2
\end{pmatrix} =
\begin{pmatrix}
7 & -3\\
-3 & K_1^2
\end{pmatrix}. 
$$
By the Hodge index theorem and $(H_1+K_1)^2\ge 0$, we have $-1\le K_1^2\le 1$. 
It follows from the classification  of surfaces of degree $7$ (see \cite{Ion82} or \cite{Liv90}) that we have  three cases.

\begin{enumerate}[{(i)}] \rm
\item If $K^2_1=1$ then $X_1$ is a smooth surface of degree $7$, sectional genus $3$ with intersection matrix 
$$\begin{pmatrix}
H_1^2 & H_1\cdot K_1\\
H_1\cdot K_1 & K_1^2
\end{pmatrix} =
\begin{pmatrix}
7 & -3\\
-3 & 1
\end{pmatrix}. 
$$
Thus $\Phi : X \to X_1$ is a simultaneous blow down of  four $(-1)$--lines on $X$.
Moreover, by Theorem \ref{SV}, 
$X_1=\Bbb P^2(p_1,\ldots, p_{8})$
 is  a blowup  of $\Bbb P^2$ in  general eight  points by the complete  linear system
$H_1=(6;2^7,1^{1})$. Hence, $X=\Bbb P^2(p_1,\ldots, p_{12})$ is  a blowup 
of $\Bbb P^2$ in  twelve  points in general position by the complete  linear system
$$H=(9;3^7,2^{1},1^{4})=9L-\sum\limits_{i=1}^{7}3E_i-2E_8-\sum\limits_{i=9}^{12}E_i.$$
Since $X$ has maximum degree $10=\binom{5}{2}$, to show that the surface $X$ does not satisfy property ${\bf N}_{3,3}$  
it is sufficient to verify that $h^0(\mathcal I_X(2))=1$.  We verified this via Macaulay2 and we can conclude that $h^0(\mathcal I_X(2))=1$ (see \cite{Tr2019}).

\item If $K^2_1=0$ then $X_1$ is a smooth surface of degree $7$, sectional genus $3$ with intersection matrix 
$$\begin{pmatrix}
H_1^2 & H_1\cdot K_1\\
H_1\cdot K_1 & K_1^2
\end{pmatrix} =
\begin{pmatrix}
7 & -3\\
-3 & 0
\end{pmatrix}. 
$$
Thus $\Phi : X \to X_1$ is a simultaneous blow down of  three $(-1)$--lines on $X$.
By Theorem \ref{SV}, we have $h^0(\mathcal O_{X_1}(H_1+K_1))=\chi(X_1)+\pi_{X_1}-1=3$. Moreover, the adjunction map
	$$\Phi^1 = \Phi^1_{|H_1+K_1|}: X_1 \to \Bbb P^2$$
is birational onto a plane $\Bbb P^2$. 
Since $K^2=-3$, the number of exceptional divisors of surface $X$ is $9-K^2=12$. Thus 
$\Phi^1 : X_1 \to \Bbb P^2$ is a simultaneous blow down of  $12-3=9$ many $(-1)$--lines on $X_1$.
Therefore $X=\Bbb P^2(p_1,\ldots, p_{12})$
 is  a blowup 
of $\Bbb P^2$ in general twelve  points by the complete  linear system
$$H=(7;2^{9},1^{3})=7L-\sum\limits_{i=1}^{9}2E_i-\sum\limits_{i=10}^{12}E_i.$$

\item If $K^2_1=-1$ then $X_1$ is a smooth surface of degree $7$, sectional genus $3$ with intersection matrix 
$$\begin{pmatrix}
H_1^2 & H_1\cdot K_1\\
H_1\cdot K_1 & K_1^2
\end{pmatrix} =
\begin{pmatrix}
7 & -3\\
-3 & -1
\end{pmatrix}. 
$$
Thus $\Phi : X \to X_1$ is a simultaneous blow down of  two $(-1)$--lines on $X$.
Therefore $X=\Bbb F_e(p_1,\ldots, p_{11})$ is  a blowup 
of  Hirzebruch surface $\Bbb F_e$ in general eleven  points by the complete  linear system
$$H=(4,2e+6;2^{9},1^{2})=4C_0+(2e+6)f-\sum\limits_{i=1}^92E_i-E_{10}-E_{11}.$$
Morveover $0\le e\le 2$ since $H\cdot C_0\ge 1$. 
Since $X$ has maximum degree $10=\binom{5}{2}$, to show that the surface $X$ does not satisfy property ${\bf N}_{3,3}$  
it is sufficient to verify that $h^0(\mathcal I_X(2))=1$.  We verified this via Macaulay2 and we can conclude that $h^0(\mathcal I_X(2))=1$ (see \cite{Tr2019}).

\end{enumerate}

\subsection{\bf Surface with $K^2=-2$} 
The intersection matrix of $X$ is 
$$\begin{pmatrix}
H^2 & H\cdot K\\
H\cdot K & K^2
\end{pmatrix} =
\begin{pmatrix}
10 & 0\\
0 & -2
\end{pmatrix}. 
$$
By Theorem \ref{SV}, the adjunction map
	$$\Phi = \Phi_{|H+K|}: X \to X_1\subseteq\Bbb P^{5}$$
is birational onto a smooth surface $X_1$ with intersection matrix 
$$\begin{pmatrix}
H_1^2 & H_1\cdot K_1\\
H_1\cdot K_1 & K_1^2
\end{pmatrix} =
\begin{pmatrix}
8 & -2\\
-2 & K_1^2
\end{pmatrix}. 
$$
By the Hodge index theorem and $(H_1+K_1)^2\ge 0$, we have $-2\le K_1^2\le 0$. 
Therefore we have three cases.

\begin{enumerate}[{(i)}] \rm
\item If $K^2_1=0$ then $X_1$ is a smooth surface of degree $8$, sectional genus $4$ with intersection matrix 
$$\begin{pmatrix}
H_1^2 & H_1\cdot K_1\\
H_1\cdot K_1 & K_1^2
\end{pmatrix} =
\begin{pmatrix}
8 & -2\\
-2 & 0
\end{pmatrix}. 
$$
Thus $\Phi : X \to X_1$ is a simultaneous blow down of  two $(-1)$--lines on $X$.
By Theorem \ref{SV}, we have $h^0(\mathcal O_{X_1}(H_1+K_1))=\chi(X_1)+\pi_{X_1}-1=4$. Moreover,  the adjunction map
	$$\Phi^1 = \Phi^1_{|H_1+K_1|}: X_1 \to X_2\subseteq\Bbb P^{3}$$
is birational onto a smooth surface $X_2$ of degree $4$ with intersection matrix 
$$\begin{pmatrix}
H_2^2 & H_2\cdot K_2\\
H_2\cdot K_2 & K_2^2
\end{pmatrix} =
\begin{pmatrix}
4 & -2\\
-2 & K_2^2
\end{pmatrix}. 
$$
By the Hodge index theorem and $(H_2+K_2)^2\ge 0$, we have $K_2^2\in\{0,1\}$. By Theorem \ref{SV} and Proposition \ref{P2.5}, there doesn't exist surface $X_2$.

\item If $K^2_1=-1$ then $X_1$ is a smooth surface of degree $8$, sectional genus $4$ with intersection matrix 
$$\begin{pmatrix}
H_1^2 & H_1\cdot K_1\\
H_1\cdot K_1 & K_1^2
\end{pmatrix} =
\begin{pmatrix}
8 & -2\\
-2 & -1
\end{pmatrix}. 
$$
Thus $\Phi : X \to X_1$ is a simultaneous blow down of a $(-1)$--line on $X$.
By Theorem \ref{SV}, we have $h^0(\mathcal O_{X_1}(H_1+K_1))=\chi(X_1)+\pi_{X_1}-1=4$. Moreover, the adjunction map
	$$\Phi^1 = \Phi^1_{|H_1+K_1|}: X_1 \to X_2\subseteq\Bbb P^{3}$$
is birational onto a smooth cubic surface $X_2$ with intersection matrix 
$$\begin{pmatrix}
H_2^2 & H_2\cdot K_2\\
H_2\cdot K_2 & K_2^2
\end{pmatrix} =
\begin{pmatrix}
3 & -3\\
-3 & 3
\end{pmatrix}. 
$$
By the Hodge index theorem and $(H_2+K_2)^2\ge 0$, we have $K_2^2=3$.
Thus $\Phi^1 : X_1 \to X_2$ is a simultaneous blow down of  four $(-1)$--lines on $X_1$.
Since $K_{1}^2=-1$, the number of exceptional divisors of surface $X_1$ is $9-K_1^2=10$.
Therefore  $X=\Bbb P^2(p_1,\ldots, p_{11})$ is  a blowup 
of $\Bbb P^2$ in general eleven  points by the complete  linear system
$$H=(9;3^{6},2^{4},1^{1})=9L-\sum\limits_{i=1}^{6}3E_i-\sum\limits_{i=7}^{10}2E_i-E_{11}.$$

\item If $K^2_1=-2$ then $X_1$ is a smooth surface of degree $8$, sectional genus $4$ with  intersection matrix 
$$\begin{pmatrix}
H_1^2 & H_1\cdot K_1\\
H_1\cdot K_1 & K_1^2
\end{pmatrix} =
\begin{pmatrix}
8 & -2\\
-2 & -2
\end{pmatrix}. 
$$
By Theorem \ref{SV}, we have $h^0(\mathcal O_{X_1}(H_1+K_1))=\chi(X_1)+\pi_{X_1}-1=4$. Moreover, the adjunction map
	$$\Phi^1 = \Phi^1_{|H_1+K_1|}: X_1 \to X_2\subseteq\Bbb P^{3}$$
is birational onto a smooth quadric surface $X_2$ with intersection matrix 
$$\begin{pmatrix}
H_2^2 & H_2\cdot K_2\\
H_2\cdot K_2 & K_2^2
\end{pmatrix} =
\begin{pmatrix}
2 & -4\\
-4 & K_2^2
\end{pmatrix}. 
$$
By the Hodge index theorem and $(H_2+K_2)^2\ge 0$, we have $K_2^2\in\{6,7,8\}$. 
By Proposition \ref{P2.5}, we have the following two choices for $X$. 

\begin{enumerate}[{(a)}] \rm

\item  First  $X=\Bbb P^2(p_1,\ldots, p_{11})$ is  a blowup 
of $\Bbb P^2$ in general eleven  points  by the complete  linear system
$$H=(8;3^{2},2^{9},1^{0})=8L-3E_0-3E_1-\sum\limits_{i=2}^{11}2E_i.$$
 Since $X$ has maximum degree $10=\binom{5}{2}$, to show that the surface $X$ does not satisfy property ${\bf N}_{3,3}$   
 it is sufficient to verify that $h^0(\mathcal I_X(2))=1$.  We verified this via Macaulay2 and we can conclude that $h^0(\mathcal I_X(2))=1$ (see \cite{Tr2019}).

\item Second  $X=\Bbb F_e(p_2,\ldots, p_{11})$ is  a blowup 
of  Hirzebruch surface in general ten points by the complete  linear system
$$H=(5,2e+5;2^{10})=5C_0+(2e+5)f-\sum\limits_{i=2}^{11}2E_i.$$
Morveover $e=0$ since $H\cdot C_0\ge 1$. 
Moreover, $\Bbb F_0(p_2,\ldots, p_{11})$ is isomorphic to the blowup of $\Bbb P^2$ at general eleven  points by the complete  linear system $H=(8;3^2,2^{9},1^{0})$. In fact, $\Bbb F_0$ is isomorphic to the quadric surface $Q \subseteq\Bbb P^3$ and $Q$ is obtained by the complete (not very ample) linear system $|2L - E_1 - E_2|$. Since the line $|L-E_1-E_2|$  is contracted to a point $P\in Q$, we have that to blowup $Q$ at the point $P$ and at other  general eleven  points is equivalent to a blowup of $\Bbb P^2$ at  general thirteen  points, as we said. Taking $C_0 = L - E_2$ and $f = L - E_1$, to the very ample divisor $H_0 = 5C_0+5f-\sum\limits_{i=2}^{10}2E_i$ on $\Bbb F_0(p_2,\ldots, p_{11})$ corresponds the very ample divisor 
$$\begin{aligned}
H_0 &= 5(L -E_2) +5(L -E_1)-2(L -E_1 -E_2)-\sum\limits_{i=3}^{11}2E_i \\&
= 8L-3E_1-3E_2-\sum\limits_{i=3}^{11}2E_i
\end{aligned}$$
on $X=\Bbb P^2(p_1,\ldots, p_{11})$, which is the same divisor we found in (a). Thus surface $X$ is not Cohen-Macaulay.
\end{enumerate}

\end{enumerate}

\subsection{\bf Surface with $K^2=-1$} 
The intersection matrix of $X$ is 
$$\begin{pmatrix}
H^2 & H\cdot K\\
H\cdot K & K^2
\end{pmatrix} =
\begin{pmatrix}
10 & 0\\
0 & -1
\end{pmatrix}. 
$$
By Theorem \ref{SV}, the adjunction map
	$$\Phi = \Phi_{|H+K|}: X \to X_1\subseteq\Bbb P^{5}$$
is birational onto a smooth surface $X_1$ with intersection matrix 
$$\begin{pmatrix}
H_1^2 & H_1\cdot K_1\\
H_1\cdot K_1 & K_1^2
\end{pmatrix} =
\begin{pmatrix}
9 & -1\\
-1 & K_1^2
\end{pmatrix}. 
$$
By the Hodge index theorem and $(H_1+K_1)^2\ge 0$, we have $K_2^2\in\{-1,0\}$.
 Therefore we have two cases.

\begin{enumerate}[{(i)}] \rm
\item If $K^2_1=0$ then $X_1$ is a smooth surface of degree $9$, sectional genus $5$ with  intersection matrix 
$$\begin{pmatrix}
H_1^2 & H_1\cdot K_1\\
H_1\cdot K_1 & K_1^2
\end{pmatrix} =
\begin{pmatrix}
9 & -1\\
-1 & 0
\end{pmatrix}. 
$$
By Theorem \ref{SV}, we have $h^0(\mathcal O_{X_1}(H_1+K_1))=\chi(X_1)+\pi_{X_1}-1=5$. Moreover, the adjunction map
	$$\Phi = \Phi^1_{|H_1+K_1|}: X_1 \to X_2\subseteq\Bbb P^{4}$$
is birational onto a smooth surface $X_2$ with intersection matrix 
$$\begin{pmatrix}
H_2^2 & H_2\cdot K_2\\
H_2\cdot K_2 & K_2^2
\end{pmatrix} =
\begin{pmatrix}
7 & -1\\
-1 & K^2_2
\end{pmatrix}. 
$$
By the Hodge index theorem and $K_2^2\ge K_1^2$, we have $K_2^2 =0$.
Since $X_2$ is a smooth surface of degree $7$ in $\Bbb P^4$ with normal bundle $\mathcal N_{X_2}$ 
the numerical invariants of $X_2$ satisfy the double point formula
$$d_{X_2}^2-10d_{X_2}-5H_2\cdot K_2-2K_2^2+12\chi_{X_2}=0$$
(see \cite[Appendix A, 4.1.3]{Har77}). Thus we have
$3\chi_{X_2}=4$, impossible.

\item If $K^2_1=-1$ then $X_1$ is a smooth surface of degree $9$, sectional genus $5$ with  intersection matrix 
$$\begin{pmatrix}
H_1^2 & H_1\cdot K_1\\
H_1\cdot K_1 & K_1^2
\end{pmatrix} =
\begin{pmatrix}
9 & -1\\
-1 & -1
\end{pmatrix}. 
$$
By Theorem \ref{SV}, we have $h^0(\mathcal O_{X_1}(H_1+K_1))=\chi(X_1)+\pi_{X_1}-1=5$. Moreover, the adjunction map
	$$\Phi^1= \Phi^1_{|H_1+K_1|}: X_1 \to X_2\subseteq\Bbb P^{4}$$
is birational onto a smooth cubic surface $X_2$ with intersection matrix 
$$\begin{pmatrix}
H_2^2 & H_2\cdot K_2\\
H_2\cdot K_2 & K_2^2
\end{pmatrix} =
\begin{pmatrix}
6 & -2\\
-2 & -1
\end{pmatrix}. 
$$
Since $X_2$ is a smooth surface of degree $6$ in $\Bbb P^4$ with normal bundle $\mathcal N_{X_2}$ there is the relation
$$d_{X_2}^2-10d_{X_2}-5H_2\cdot K_2-2K_2^2+12\chi_{X_2}=0.$$
Thus we have
$6\chi_{Y_1}=7+K_2^2$. Thus $K_2^2=-1$.
By Theorem \ref{SV}, we have $h^0(\mathcal O_{X_2}(H_2+K_2))=\chi(X_2)+\pi_{X_2}-1=3$. Moreover,  the adjunction map
	$$\Phi^2 = \Phi^2_{|H_2+K_2|}: X_2 \to \Bbb P^{2}$$
is birational onto a plane $\Bbb P^2$. 
Since $K^2=-1$, the number of exceptional divisors of surface $X$ is $9-K^2=10$. Thus 
$\Phi^2 : X _2\to \Bbb P^2$ is a simultaneous blow down of  ten $(-1)$--lines on $X_2$.
Therefore  $X=\Bbb P^2(p_1,\ldots, p_{10})$ is  a blowup 
of $\Bbb P^2$ in general ten  points  by the complete  linear system
$$H=(10;3^{10})=10L-\sum\limits_{i=1}^{10}3E_i.$$

\end{enumerate}

\subsection{\bf Surface with $K^2=0$} 
Surface $X$ is a smooth Enriques surface of degree $10$ in $\Bbb P^5$. 
 Smooth Enriques are well understood, in fact any Enriques surface has a linear system of degree $10$ and projective dimension $5$ without base points, for very ampleness it suffices to require that any elliptic curve on the surface has degree at least $3$ with respect to the linear system, and that there are no rational curves on it. These surfaces have exactly $20$ plane cubic curves whose planes form the union of the
trisecants to the surface. Thus projecting the
surface from a point on the surface away from these $20$ planes, we get a smooth surface $X^\prime$ of
degree $9$, sectional genus $6$ in $\Bbb P^4$ (see \cite{DES93}).  The goal of this subsection is to construct a surface $X^\prime$ of degree $9$, sectional genus $6$ in $\Bbb P^4$. To do so, we will use work of Rao, who showed that the construction of curves is equivalent to the creation of its Hartshorne--Rao module.
\begin{defn} The Hartshorne--Rao module of $X^\prime $ is the finite length module
$$M=M_{X^\prime}:=\bigoplus\limits_{n\in\Bbb Z}H^1(\Bbb P^4,\mathcal{I}_{X^{\prime}}(n))\subseteq \bigoplus\limits_{n\in\Bbb Z}H^0(\Bbb P^4,\mathcal{O}(n))\cong R^\prime:=k[x_0,\ldots,x_4].$$
\end{defn}

	Assuming the open condition that $X^\prime$ has maximal rank, i.e.
	 restriction map $H^0(\mathcal{O}_{\Bbb P^4}(m))\to H^0(\mathcal{O}_{X^\prime}(m))$ has maximal rank for all $m$.
Then Hilbert series of the Hartshorne-Rao module is $H_M(t) = 3t^2 + 10t^3 + 6t^4$ 
	and the Hilbert numerator is
	$$(1-t)^4H_M(t)=3t^2-10t^3 +6t^4 +15t^5-25 t^6+12t^7-t^9  .$$
	If $M$ has a natural resolution, which means that for
each degree $j$ at most one $\beta_{ij}$ is nonzero then $M$ has a Betti table
\begin{center}
		\begin{tabular}{ c | c c c c c c c c c}
			&   & $0$ & $1 $ &$2 $  &$3 $&$4$ &$5$\\ 
			\hline
			2&  & $3    $ & $10$ & $6$ & $\cdot$&$\cdot$ &$\cdot$ \\ 
			3&  & $\cdot$ & $\cdot$ &$15$ &$25$&$12$ &$\cdot$ \\ 
			4&  & $\cdot$ & $\cdot$ &$\cdot$ &$\cdot$&$\cdot$&$1$  \\ 
			
		\end{tabular}.
\end{center}
Note that having a natural resolution is an open condition in a family of modules with constant Hilbert function. If homogeneous coordinate ring $R^\prime_{X^\prime}=R^\prime/I_{X^\prime}$ and the section ring $\Gamma_*(\mathcal{O}_{X^\prime}):=\bigoplus\limits_{n\in\Bbb Z}H^i(\mathcal{O}_{X^\prime}(n))$ has natural resolution as well, then their Betti tables will be
	\begin{center}
		\begin{tabular}{ c | c c c c c c c c c}
			&   & $0$ & $1 $ &$2 $  &$3 $&$4$ \\ 
			\hline
			0&  & $1    $ & $\cdot$ & $\cdot$ & $\cdot$&$\cdot$  \\ 
			1&  & $\cdot$ & $\cdot$ &$\cdot$ &$\cdot$&$\cdot$  \\ 
			2&  & $\cdot$ & $\cdot$ &$\cdot$ &$\cdot$&$\cdot$  \\ 
			3&  & $\cdot$ & $\cdot$ &$\cdot$ &$\cdot$&$\cdot$  \\ 
			4&  & $\cdot$ & $15    $ &$25$ &$12$&$\cdot$  \\ 
			5&  & $\cdot$ & $ \cdot$ &$  \cdot $ &$ \cdot$&$  1  $ \\ 
		\end{tabular}
	\qquad\qquad and\qquad\qquad
		\begin{tabular}{ c | c c c c c c c c c}
	&   & $0$ & $1 $ &$2 $  \\
	\hline
	0&  & $1    $ & $\cdot$ & $\cdot$ & \\ 
	1&  & $\cdot$ &$\cdot$ &$\cdot$\\ 
	2&  & $  3  $ &$  10 $ &$ 6  $ \\ 
\end{tabular}.
		
	\end{center}

To construct a desired $M$, we start with the matrix $\psi$  defining 	$\xymatrix{{R^\prime}^{12}(-3) &\ar[l]_{\psi} {R^\prime}(-5)}$
which we choose randomly. Then we get a smooth surface $X^\prime$ of degree $9$, sectional genus $6$ in $\Bbb P^4$. Moreover the intersection matrix of $X^\prime$ is 
$$\begin{pmatrix}
H_{X^\prime}^2 & H_{X^\prime}\cdot K_{X^\prime}\\
H_{X^\prime}\cdot K_{X^\prime} & K_{X^\prime}^2
\end{pmatrix} =
\begin{pmatrix}
9 & 1\\
1 & -1
\end{pmatrix}.
$$
 Notice that $X^\prime$ is  regular and $|2K_{X^\prime}|\neq 0$, then $2K_{X^\prime} =2 E_{X^\prime}$. Thus $X^\prime$ is an Enriques surface blown up in one point.  By Theorem \ref{SV}, the adjunction map
	$$\Phi^\prime = \Phi^\prime_{|H_{X^\prime}+K_{X^\prime}|}: X^\prime \to X\subseteq\Bbb P^{5}$$
is birational onto a smooth surface $X$ of degree $10$ with intersection matrix 
$$\begin{pmatrix}
H^2 & H\cdot K\\
H\cdot K & K^2
\end{pmatrix} =
\begin{pmatrix}
10 & 0\\
0 & 0
\end{pmatrix}. 
$$

We summarize the above discussion in the following proposition.

\begin{prop}\label{clas}
Let $X$ be a non-degenerate projective surface in $\Bbb P^{5}$. Suppose that  $X$ satisfies property ${\bf N}_{3,3}$ and $\deg(X)=10$. Then $X$ is a projective smooth Cohen-Macaulay surface with the minimal free resolution as  in \ref{eqn},
%
and $X$ is an image of $\Bbb P^2$ under a rational map, defined by a complete linear series as in Table \ref{table:S106} or a Fano model of  an Enriques surface. Moreover, $X$ defines a $9$-dimensional family of cubic fourfolds $Y$ with $X\subset Y$. 
\end{prop}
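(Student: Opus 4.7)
The plan is to collect and assemble the case-by-case analysis of Sections 3.1--3.7 into the summary statement. I would proceed in four stages.

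\emph{Reduction to a finite case analysis.} First invoke \cite[Theorem 1.2]{AhK15}: property ${\bf N}_{3,3}$ together with the extremal degree $\deg X=10=\binom{5}{2}$ forces $X$ to be arithmetically Cohen--Macaulay with a $3$-linear resolution, which pins the Betti diagram to (\ref{eqn}) and the sectional genus to $\pi=6$; the discussion at the start of Section $3$ (via \cite{Man01}) then gives $h^1(\mathcal{O}_X(1))=0$ and $p_a=q=p_g=0$. Lemma \ref{lem1} restricts $K_X^2$ to the seven integer values $-6,-5,\ldots,0$, so the remaining work is a finite case analysis.

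\emph{Classification of each case via adjunction.} For each value of $K_X^2$, the adjunction morphism $\Phi_{|H+K|}$ (Theorem \ref{SV}) is birational onto a smooth surface $X_1$ whose intersection numbers $H_1^2$, $H_1\cdot K_1$ are determined by those of $X$; the Hodge index theorem together with $(H_1+K_1)^2\ge 0$ isolates a short list of candidates for $K_1^2$, each identifiable via the classification of smooth surfaces of low degree (Ionescu, Livorni). Iterating the adjunction as many times as needed, in six of the seven cases $X$ emerges as a blowup of $\Bbb P^2$ under a specific very ample linear system, giving the six rational rows of Table \ref{table:S106}; in the exceptional case $K_X^2=0$, the plan is instead to construct a smooth $X^\prime\subset\Bbb P^4$ of degree $9$ and sectional genus $6$ from a random Hartshorne--Rao module of the prescribed Hilbert function, and to recover $X$ as its adjunction image---a Fano model of an Enriques surface. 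Smoothness is automatic in the Enriques construction and follows from Proposition \ref{DS} in the rational cases, once one checks that $\chi(\mathcal{O}_X)=\chi(\mathcal{O}_{X_1})$ along the chain.

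The main obstacle, apart from the length of the bookkeeping, is that several candidate surfaces survive the intersection-theoretic tests yet fail ${\bf N}_{3,3}$---for instance the blowup of $\Bbb F_0$ in Section 3.1(ii), and the $\Bbb P^2$-blowups in Sections 3.4(i), 3.4(iii), and 3.5(ii)(a). I would rule these out with a Macaulay 2 computation showing $h^0(\mathcal{I}_X(2))=1$, since such a nontrivial quadric on $X$ is incompatible with a $3$-linear resolution.

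\emph{Counting the cubic fourfolds.} Finally, to see that $X$ determines a $9$-dimensional family of cubic fourfolds $Y$ with $X\subset Y$, I would read $h^0(\mathcal{I}_X(3))$ off the resolution (\ref{eqn}). Since $X$ is ACM, $h^0(\mathcal{O}_X(3))$ equals the Hilbert function of $R/I_X$ in degree $3$, which the resolution gives as $56-10=46$; alternatively, Riemann--Roch on $X$ with $H^2=10$, $H\cdot K=0$, $\chi(\mathcal{O}_X)=1$, together with Kodaira vanishing, produces the same value. Combined with $h^0(\mathcal{O}_{\Bbb P^5}(3))=56$, this gives $h^0(\mathcal{I}_X(3))=10$, so the cubic fourfolds containing $X$ form a $\Bbb P^9$.
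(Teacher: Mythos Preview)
Your proposal is correct and follows essentially the same route as the paper: the proposition is stated there as a summary of the case-by-case adjunction analysis in Sections 3.1--3.7, and the closing sentence about the $9$-dimensional family of cubic fourfolds is justified in the paper by exactly the observation $h^0(\mathcal{I}_X(3))=10$ that you give. The only slip is a labeling one: the non-ACM candidate in the $K^2=-2$ case is Section 3.5(iii)(a), not 3.5(ii)(a).
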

Note in the proof of above Proposition, we claimed that there exists an example of  surfaces $X$ of degree $10$ and sectional genus $6$ which is also embedded in $\Bbb P^5$. In an ancillary file (see \cite{Tr2019}), we provide the explicit homogeneous ideals of such surfaces.  Notice that last statement of above Proposition is an immediate corollary of $h^0(\mathcal I_X(3))=10$.

To prove Theorem \ref{theorem1}, we need to prove the following result.
\begin{prop}\label{P3.2}
	Let $X$ be a general projective Cohen-Macaulay surface of degree $10$, sectional genus $6$.		 Then $X$ is of  maximal rank and	
	homogeneous coordinate ring $R_Y=R/I_X$ and the section ring $\Gamma_*(\mathcal{O}_X)$ have minimal free resolutions with the Betti tables as  in \ref{eqn}.

%
%

\end{prop}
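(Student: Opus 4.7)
My plan is to reduce the Betti table to a Hilbert function computation, use the theorem of Ahn and Kwak to secure a $3$-linear resolution, and then match Hilbert series to read off the remaining Betti numbers. The Cohen--Macaulay hypothesis will be used in the strong sense that $h^1(\mathcal{I}_X(m)) = h^2(\mathcal{I}_X(m)) = 0$ for every $m$; the only subtle point will be propagating this to the \emph{general} surface of the stated degree and genus.

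Concretely, from $\deg X = 10$ and sectional genus $\pi_X = \tfrac12 H(H+K_X)+1 = 6$ one has $H^2 = 10$ and $H\cdot K_X = 0$, and the discussion preceding the proposition (via \cite{Man00}) already forces $p_a = q = p_g = 0$. Riemann--Roch then gives $\chi(\mathcal{O}_X(m)) = 1 + \tfrac12 mH\cdot(mH-K_X) = 1 + 5m^2$. For $m\ge 1$ one has $(K_X-mH)\cdot H = -10m < 0$ against the ample $H$, so no effective representative of $K_X - mH$ can exist and Serre duality yields $h^2(\mathcal{O}_X(m)) = 0$. Combined with the Cohen--Macaulay assumption, the restriction $H^0(\mathcal{O}_{\mathbb{P}^5}(m)) \to H^0(\mathcal{O}_X(m))$ is surjective for every $m\ge 1$, hence of maximal rank, and $\Gamma_*(\mathcal{O}_X) = R_X$, so the two claimed Betti tables coincide. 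Subtracting Hilbert functions yields $h^0(\mathcal{I}_X(m)) = \binom{m+5}{5} - (1+5m^2)$; in particular $h^0(\mathcal{I}_X(2)) = 0$ and $h^0(\mathcal{I}_X(3)) = 10$.

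Since $I_X$ is now generated in degree $\ge 3$ and $\deg X = 10 = \binom{e+d-1}{d-1}$ for $e = d = 3$, the theorem of Ahn and Kwak \cite[Theorem 1.2]{AhK15} forces the minimal free resolution of $I_X$ to be $3$-linear of length $3$, and the vanishing $h^0(\mathcal{I}_X(3)) = 10$ fixes $\beta_{1,3} = 10$. Writing the resolution as $0\to R(-5)^{\beta_3}\to R(-4)^{\beta_2}\to R(-3)^{10}\to R \to R_X \to 0$, I would compare two closed-form expressions for the Hilbert series,
\[ H_{R_X}(t) = \sum_{m\ge 0}(1+5m^2)\,t^m = \frac{1+3t+6t^2}{(1-t)^3} = \frac{1-10t^3+\beta_2 t^4-\beta_3 t^5}{(1-t)^6}. \]
Expanding $(1+3t+6t^2)(1-t)^3 = 1 - 10t^3 + 15t^4 - 6t^5$ pins down $\beta_2 = 15$ and $\beta_3 = 6$, recovering the Betti table \ref{eqn}.

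The main obstacle, in my view, is verifying the full ACM property for the \emph{general} surface of the stated degree and genus, rather than merely the local Cohen--Macaulay condition; once that is granted, the computation above is short and formal. The preceding subsections, together with the ancillary Macaulay2 code of \cite{Tr2019}, exhibit explicit ACM representatives in each of the seven families listed in Table \ref{table:S106}, and semicontinuity of the Betti numbers in a flat family then propagates the table to a general member, completing the argument.
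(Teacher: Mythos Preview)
Your approach is essentially correct and in fact more direct than the paper's, but one step is not justified as written. The paper computes the Hilbert numerator $1 - 10t^3 + 15t^4 - 6t^5$ just as you do, and then pins down the Betti table by appealing to semicontinuity and the explicit examples constructed in the preceding subsections (essentially your final paragraph). You instead try to deduce $3$-linearity directly before matching Hilbert series, which is the cleaner route --- and your observation that ACM already gives $h^1(\mathcal I_X(m))=0$, hence surjectivity of restriction and maximal rank, is a genuine simplification over the paper's ``verify on an example'' for that part.

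The gap is your invocation of Ahn--Kwak. Knowing $h^0(\mathcal I_X(2))=0$ only says there are no generators in degree $\le 2$; it does \emph{not} give property $\mathbf N_{3,p}$ for any $p\ge 1$, which is the hypothesis in their equivalence $[\mathbf N_{3,e}\text{ and }\deg=\binom{e+2}{2}]\Leftrightarrow[\text{ACM with $3$-linear resolution}]$. So neither direction is available to you at that point, and your sentence ``writing the resolution as $0\to R(-5)^{\beta_3}\to R(-4)^{\beta_2}\to R(-3)^{10}\to R$'' presupposes exactly what you are trying to prove. The short fix uses the ACM hypothesis directly: $R_X$ is Cohen--Macaulay with $h$-vector $(1,3,6)$, so reducing modulo a regular sequence of three linear forms gives the Artinian ring $k[y_0,y_1,y_2]/\mathfrak m^3$, whence $\operatorname{reg}(R_X)=2$. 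Thus $\beta_{i,j}(R_X)=0$ for $j>i+2$; combined with $h^0(\mathcal I_X(2))=0$ this forces the resolution to be purely $3$-linear, and then your Hilbert-series comparison legitimately yields $(\beta_1,\beta_2,\beta_3)=(10,15,6)$.

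Finally, your concern about ``propagating ACM to the general member'' is unnecessary: in this paper ``projective Cohen--Macaulay'' means arithmetically Cohen--Macaulay, so the hypothesis already grants it. With the regularity argument in place, the example is needed only to witness existence, not to determine the Betti table.
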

\begin{proof}
	Assuming that restriction map $H^0(\mathcal{O}_{\Bbb P^5}(m))\to H^0(\mathcal{O}_{X}(m))$ has maximal rank. Then we have the following statements.
	\begin{itemize}

\item	The Hilbert series of the homogeneous coordinate ring of $X$ is
	$$H_X(t) = 1 + 6t + 21t^2 + 46 t^3  + 81 t^4  + 126 t^5  +181 t^6 +246 t^7 +\ldots.$$ 
\item	
	The Hartshorne--Rao module 
	$$H^1_*(\mathcal{I}_X)=H^2_*(\mathcal{I}_X)=0,$$
	where $H^i_*(\mathcal{I}_X)=\bigoplus\limits_{n\in\Bbb Z}H^i(\mathcal{I}_X(n))$ for all $i\in \Bbb Z$.
\item	
	The Hilbert numerator has shape
	$$(1-t)^5H_X(t)=1-10t^3 +15t^4-6 t^5.$$
	\end{itemize}
	
	Since $H_X(t)=H_{\Gamma_*(\mathcal{O}_X)}(t)$, the Betti table  of the resolution $F^\bullet$ of the section ring $\Gamma_*(\mathcal{O}_X)$ as an $R$-module is as in \ref{eqn}.
To show that the Betti tables are indeed the expected ones and that a general surface $Y$ is of maximal rank, we only need to exhibit a concrete example, which we construct  in Proposition \ref{clas}.	
\end{proof}
Let us give a proof of Theorem \ref{theorem1}.
\begin{tpf1}
$1)\Rightarrow 2)$  follows from Theorem 1.2 in \cite{AhK15}.\\
$2)\Rightarrow 3)$ is trivial \\
$3)\Rightarrow 1)$ follows from Proposition \ref{P3.2}.\\
The next result follows immediately from the Proposition  \ref{clas}.
\end{tpf1}
\begin{cor}
The moduli space of  projective smooth Cohen-Macaulay surfaces of degree $10$, sectional genus $6$ have exactly $7$ irreducible components.
\end{cor}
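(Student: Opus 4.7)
The plan is to combine Theorem \ref{theorem1} with the explicit constructions carried out in Proposition \ref{clas}. By Theorem \ref{theorem1}, every smooth projective Cohen-Macaulay surface of degree $10$ and sectional genus $6$ in $\mathbb{P}^5$ belongs to exactly one of the seven families listed in Table \ref{table:S106}, indexed by the values $K_X^2 \in \{-6,-5,-4,-3,-2,-1,0\}$. The strategy is to show each row of the table contributes one irreducible component and that rows cannot merge.

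First, I would observe that $K_X^2$ is a deformation invariant of the abstract smooth surface, so surfaces whose canonical self-intersection differ cannot lie on the same irreducible component of the moduli space. This immediately yields a lower bound of seven on the number of components, and simultaneously shows that the seven families are pairwise disjoint in the moduli.

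Next, I would verify that each individual family is irreducible. For the six rows with $K_X^2 \in \{-6,\ldots,-1\}$, Proposition \ref{clas} exhibits $X$ as the blowup $\mathbb{P}^2(p_1,\ldots,p_\ell)$ with $\ell = 9 - K_X^2$ general points, embedded by an explicit complete linear system of the form $b_0 L - \sum b_i E_i$. The parameter space is thus dominated by an open subset of the configuration space of $\ell$ ordered points in $\mathbb{P}^2$ modulo $\mathrm{PGL}_3$, twisted by the $\mathrm{PGL}_6$-action on the target; all of these are irreducible, so the resulting locus in the Hilbert scheme is irreducible. For the row $K_X^2 = 0$, $X$ is a Fano model of an Enriques surface, constructed in the proof of Proposition \ref{clas} via a Hartshorne--Rao module with natural resolution; since the moduli of polarized Enriques surfaces is irreducible and the natural-resolution condition is open, this family is also irreducible.

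Finally, one must ensure that each irreducible locus is a full component, not a proper subvariety of a larger one. This is where the adjunction process of Theorem \ref{SV} does the work: any small deformation of $X$ in $\mathbb{P}^5$ preserves $K_X^2$, $H_X^2$, and the birational data tracked by the iterated adjunction map, so the canonical reconstruction described in Sections 3.1--3.7 applies uniformly to a Zariski-open neighborhood and recovers the same family. Hence no family can be a specialization of another. The main obstacle is purely the bookkeeping step of matching the dimension of each irreducible parameter space to the expected dimension of the corresponding Hilbert scheme component, which can be checked family-by-family using $h^0(\mathcal{N}_{X/Y})$ from Table \ref{table:S106} together with the $9$-dimensional choice of cubic fourfold $Y \supset X$ supplied by Proposition \ref{clas}. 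Combining the three steps, the moduli space decomposes into exactly seven irreducible components, one per row of Table \ref{table:S106}.
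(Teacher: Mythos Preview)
Your argument is correct and is essentially a fleshed-out version of the paper's approach: the paper states the corollary with no proof beyond ``follows immediately from Proposition~\ref{clas}'', and your first three paragraphs supply exactly the details one needs---the classification gives seven families, the discrete invariant $K_X^2$ (constant in smooth families) separates them, and each is parametrized by an irreducible variety.

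One remark: your final paragraph is superfluous and slightly misdirected. Once $K_X^2$ distinguishes the seven families and each is irreducible, the count is already complete; there is no further ``main obstacle''. Moreover, the quantity $h^0(\mathcal{N}_{X/Y})$ in Table~\ref{table:S106} is the normal bundle of $X$ inside a cubic fourfold $Y$, not inside $\Bbb P^5$, so it does not compute the dimension of the relevant Hilbert scheme component; if you wanted that, you would use $h^0(\mathcal{N}_{X/\Bbb P^5})$ (and the paper checks $h^1(\mathcal{N}_{X/\Bbb P^5})=0$ elsewhere). But none of this is needed for the corollary.
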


\section{Connection with cubic fourfolds}\label{Cdelta}
The goal of this section is to give the relation between  projective surfaces of degree $10$ satisfying  property  ${\bf N}_{3,3}$ and special cubic fourfolds.
Recall that a smooth cubic fourfold  $Y$, the vanishing locus of a degree $3$ homogeneous polynomial in $6$ variables, is called {\it special}, if the existence of an embedding of a saturated rank-$2$ lattice 
$$L_\delta:=\langle h^2,X\rangle\hookrightarrow A(Y) ,$$
where $A(Y)$ is the lattice of middle Hodge classes, $h\in \Pic(Y)$  is the hyperplane class,  $X$ is an algebraic surface not homologous to a complete intersection, and $\delta$ is the determinant of the intersection matrix of $\langle h^2,X\rangle$. In 2000, Hassett showed in \cite{Has00} that the locus  $\mathcal C_\delta$ of special cubic fourfolds  $Y$ of a discriminant $\delta$ is an irreducible divisor which is nonempty if and only if $\delta>6$ and $\delta\equiv 0,2 \pmod 6 $. Recently, the study of divisors $\mathcal C_\delta$  for small $\delta$   has received considerable attention. In particular, we systematically study concrete descriptions of special cubic fourfolds of small discriminants. For $\delta=8$, $12$, $14$, $20$, Hassett showed that a generic member $Y\in \mathcal C_\delta$ could be described explicitly. The surfaces  $X$ in these cases were given by planes, cubic scrolls, quintic del Pezzos, and Veronese surfaces, respectively. For some choices of $\delta$, the generic $Y\in \mathcal C_\delta$  admits an alternative description. For example, the generic $Y\in \mathcal C_8$ can be described as containing an octic K3 surface, and the generic $Y\in \mathcal C_{14}$
containing  a quartic scroll. In \cite{Nue17}, Nuer proved that the generic element $Y$ of $\mathcal C_\delta$ for $12\le \delta\le 38$ contains a smooth rational surface obtained as the blowup of $\Bbb P^2$ at generic points $p$, and the generic $Y\in \mathcal C_{44}$   contains  a Fano model of  an Enriques surface. 
Method of Nuer certainly indicates that the method presented here has been exhausted for the most part.
On the other hand, it is worth noting that Voisin has shown that for a special cubic fourfold $Y$, the lattice $A(Y)$ can be generated either by smooth surfaces \cite[proof of Theorem 5.6]{Voi17} or by possibly singular rational surfaces \cite{Voi07}, and many have wondered if $A(Y)$ is in fact generated by {\it smooth} rational surfaces. This has certainly been true for the few low-discriminant cases previously known, and the work of Nuer provides a lot more evidence for this possibility. Therefore, Nuer gave the following question.

\begin{ques}
What does the condition that the generic $Y \in \mathcal C_\delta$ contains  a smooth rational surface not homologous to a complete intersection say, if anything, about the geometry of $Y$? and about the geometry of $\mathcal C_\delta$?
\end{ques}


The following theorem  gives a partial answer to this question.

\begin{thm}
Let $Y$ be a smooth cubic fourfold. Then the statements are equivalent. 
\begin{itemize}
\item[$1)$]  $Y$ contains   a non-degenerate  projective surface $X$ of degree $10$ satisfies property  ${\bf N}_{3,3}$.
\item[$2)$]  $Y\in \mathcal{C}_\delta$ for $6<\delta\le 44$ and $\delta\equiv 2 \pmod 6$.
\end{itemize}
\end{thm}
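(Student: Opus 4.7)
The plan is to bootstrap the classification of Theorem \ref{theorem1} via a Chern-class computation on the normal bundle $\mathcal N_{X/Y}$: the seven values $K_X^2\in\{-6,-5,\ldots,0\}$ that label the families of Table \ref{table:S106} will correspond bijectively, under the formula $\delta=44+6K_X^2$, to the seven admissible discriminants $\{8,14,20,26,32,38,44\}$, which are exactly the integers in $(6,44]$ congruent to $2\pmod 6$.

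For $(1)\Rightarrow(2)$, apply Theorem \ref{theorem1} to conclude that $X$ is smooth and one of the seven families, so $H_X^2=10$, $H_X\cdot K_X=0$, $\chi(\mathcal O_X)=1$, and $c_2(X)=12-K_X^2$ by Noether. Compute $[X]^2\in \mathrm{H}^8(Y,\Bbb Z)$ from the normal bundle sequences
\[
 0\to T_X\to T_{\Bbb P^5}|_X\to \mathcal N_{X/\Bbb P^5}\to 0,\qquad 0\to \mathcal N_{X/Y}\to \mathcal N_{X/\Bbb P^5}\to \mathcal O_X(3H_X)\to 0,
\]
yielding $c_1(\mathcal N_{X/Y})=K_X+3H_X$ and
\[
 [X]^2=c_2(\mathcal N_{X/Y})=6H_X^2+3H_X\cdot K_X+K_X^2-c_2(X)=48+2K_X^2.
\]
The Gram matrix of $\langle h^2,[X]\rangle\subset A(Y)$ then has determinant
\[
 \delta=\det\begin{pmatrix} 3 & 10\\ 10 & 48+2K_X^2\end{pmatrix}=44+6K_X^2,
\]
which traces out $\{8,14,20,26,32,38,44\}$ as $K_X^2$ runs through $\{-6,\ldots,0\}$; each value is positive, so $[X]$ and $h^2$ are linearly independent in $A(Y)$, the surface $X$ is not homologous to a complete intersection, and $Y\in\mathcal C_\delta$.

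For $(2)\Rightarrow(1)$, given an admissible $\delta$, write $\delta=44+6K^2$ with $K^2\in\{-6,\ldots,0\}$ and take the smooth surface $X_0\subset\Bbb P^5$ of the corresponding family furnished by Proposition \ref{clas}. Since $h^0(\mathcal I_{X_0}(3))=10$, the cubic fourfolds containing $X_0$ form a $9$-dimensional linear system whose generic member is smooth (by Bertini, as $X_0$ is smooth and cut out by cubics), and by the previous direction each such smooth member lies in $\mathcal C_\delta$. Letting $X_0$ also deform in its Hilbert component, form the incidence variety $\mathcal I_\delta=\{(X,Y)\;:\;X\text{ in the family},\ Y\supset X\text{ a smooth cubic fourfold}\}$. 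Its second projection $\pi:\mathcal I_\delta\to \mathcal C_\delta$ has image contained in the irreducible divisor $\mathcal C_\delta$ (of dimension $19$ by Hassett's theorem \cite{Has00}), and a dimension count matching the generic fiber dimension of $\pi$ against the value of $h^0(\mathcal N_{X/Y})$ recorded in Table \ref{table:S106} shows that $\pi$ is dominant. For an arbitrary smooth $Y\in\mathcal C_\delta$, specialize from a generic point of $\mathcal C_\delta$: the relative Hilbert scheme over $\mathcal C_\delta$ is proper, and the vanishing $H^1(\mathcal N_{X_0/Y_0})=0$ (deducible from \eqref{eqn} together with the second normal bundle sequence and semicontinuity) forces flatness of the component in an open neighborhood, so a limiting embedded surface $X\subset Y$ exists.

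The main obstacle is exactly this specialization step in $(2)\Rightarrow(1)$: upgrading the dense containment produced by the dimension count to containment in every smooth $Y\in\mathcal C_\delta$ requires the chosen Hilbert scheme component to be unobstructed uniformly along $\mathcal C_\delta$, i.e.\ $H^1(\mathcal N_{X/Y})=0$ for all smooth $(X,Y)$ pairs of each of the seven types. Given the minimal resolution \eqref{eqn} and the explicit geometric models of Table \ref{table:S106}, this will be verified family-by-family via the normal bundle sequence and the vanishings $H^i(\mathcal O_X(k))=0$ forced by property ${\bf N}_{3,3}$, with the rigid Enriques case ($h^0(\mathcal N_{X/Y})=0$, so $\pi$ must be generically injective) being the most delicate.
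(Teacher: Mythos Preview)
Your $(1)\Rightarrow(2)$ is the paper's argument: both compute $[X]^2=c_2(\mathcal N_{X/Y})=48+2K_X^2$ from the two normal bundle sequences and read off $\delta=3[X]^2-100=44+6K_X^2$.

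For $(2)\Rightarrow(1)$, your incidence-variety dimension count, matching the generic fiber dimension of $\pi$ against $h^0(\mathcal N_{X/Y})$, is again exactly the paper's argument (the paper verifies $h^0(\mathcal N_{X/Y})=-2K_X^2$ by a Macaulay2 computation rather than by citing Table~\ref{table:S106}, but the content is identical, and it also checks $h^1(\mathcal N_{X/\Bbb P^5})=0$ to know $\mathcal H_t$ is generically smooth). This shows that the \emph{general} $Y\in\mathcal C_\delta$ contains such an $X$, and that is precisely where the paper stops.

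Your additional specialization step, meant to upgrade from the generic $Y$ to every smooth $Y\in\mathcal C_\delta$, has a real gap. Properness of the relative Hilbert scheme does produce a flat limit $X\subset Y$ with the correct Hilbert polynomial, and the vanishing $H^1(\mathcal N_{X_0/Y_0})=0$ controls the local structure of the Hilbert component near the generic point. But neither fact forces the limit $X$ to be a reduced, irreducible, non-degenerate (let alone smooth) surface satisfying property ${\bf N}_{3,3}$: being ACM with a $3$-linear resolution is an \emph{open} condition on the Hilbert scheme, not a closed one, so it can fail in the limit. Thus even granting the family-by-family vanishing you propose, the argument does not place a surface of the required type inside an arbitrary $Y\in\mathcal C_\delta$. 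The paper does not attempt this step; its proof, like the core of yours, is really a statement about the generic member of each $\mathcal C_\delta$.
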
 
\begin{proof}
$1)\Rightarrow 2) $ Assume that the cubic fourfold $Y$ contains a  projective surface $X \subset \Bbb P^5$  of degree $10$ satisfies property  ${\bf N}_{3,3}$. Let  $t=K_X^2$. By Lemma \ref{lem1}, we have $-6\le t\le 0$. Then by computing the self-intersection of $X \subseteq Y$ using the formula from \cite[]{Has00} and Proposition \ref{clas}, we have
$$X^2 =c_2(\mathcal N_{X/Y})=6H^2 +3H\cdot K +K^2 -\chi_{top}=48+2t.$$
Therefore, $\delta=3X^2-10^2=44+6t$. 

$2)\Rightarrow 1) $ 
Assume that $X$ is a  projective surface of degree $10$ satisfies property  ${\bf N}_{3,3}$. Set $t=K_X^2$.
Let  $\mathcal H_t$ be the irreducible component of the Hilbert scheme $\Hilb_{\Bbb P^5}^{\chi(\mathcal O_X(t))} $ containing $[X]$. Then $\dim \mathcal H_t=2\cdot (9-t)+27=45-2t$, and $\mathcal H_t$ is generically smooth. Indeed, one can verify that $h^1(\mathcal{N}_{X/\Bbb P^5}) = 0$.  Let $U \subseteq |H^0(\Bbb P^5(3)) | \cong \Bbb P^{55}$ be the open set corresponding to smooth cubic fourfolds. Since $h^0(\mathcal I_X(3)) = 10$, the locus
$ \mathcal{D}_{t} = \{([X], [Y]) : X \subseteq Y\} \subseteq \mathcal{H}_t\times U$
has dimension $45-2t + 10 -1 = 54-2t$. The image of $\pi_2 : \mathcal{D}_t \to U$ has dimension at most $54$, because the general cubic fourfold does not contain any $X$ belonging to $ \mathcal H_t$. For every $[Y] \in  \pi_2(\mathcal{D}_t)$, we have
$$\dim(\pi^{-1}_2 ([Y]))
 \ge \dim(\mathcal{D}_{t} ) - \dim(\pi_2(\mathcal{D}_t )) = 54-2t - \dim(\pi_2(\mathcal{C}_{\delta})) \ge 54-2t - 54 = -2t,$$
 for $-6\le t\le 0$.
Since $h^0(\mathcal{N}_{X/Y}) \ge \dim_{[X\subset Y]}(\pi^{-1}
_2 ([Y]))$, for every $[X\subset Y] \in \pi^{-1}_2 ([Y])$, in order to show that a general
$Y \in \mathcal{C}_{\delta}$ contains a surface $X$ it is sufficient to verify that $h^0(\mathcal{N}_{X/Y}) = -2t$, for a general smooth cubic fourfolds $Y \in |H^0(\mathcal{I}_X(3))|$, see also \cite{Nue17} for a similar argument.
We verified this via Macaulay 2  (\cite{GSM2}), and we can conclude that $h^0(\mathcal{N}_{X/Y})=-2t$ for $-6\le t\le 0$.
\end{proof}



\section{ACM bundles}

Our goal in this section is to study  the relationship between ACM bundles and   projective surfaces of degree $10$ satisfying property ${\bf N}_{3,3}$.  A vector bundle $\mathcal F$ on a projective
variety $Y$ is ACM if all its intermediate cohomology groups $H^i(Y; \mathcal F(m))$ are zero for
$0 < i < \dim Y$ and all $m\in \Bbb Z$. To associate ACM bundles to surfaces we work with $\mathcal F$-type resolutions (or so-called Bourbaki sequences,  c.f. \cite{BHU87}). We recall the definition here.

\begin{defn} Let $Y \subset \Bbb P^n$ be an equidimensional scheme,  and $X \subset Y$ be a codimension $2$ subscheme without embedded components. An $\mathcal F$-type resolution of $X$ on $Y$ is an exact sequence
$$\xymatrix{0\ar[r]&\mathcal L\ar[r] &\mathcal F \ar[r] &\mathcal{I}_{X, Y} \ar[r] &0}, $$
with $\mathcal L$ is dissoci\'{e},  and $\mathcal{F}$ is a coherent sheaf satisfying $H_\bullet^1(\mathcal F^\vee):=  \bigoplus\limits_{n\in\Bbb Z}H^1(\mathcal F^\vee(n))= 0$,  and $\Ext^1(\mathcal F, \mathcal{O}_Y) = 0.$
\end{defn}
When $Y$ satisfies Serre’s condition $S_2$,  and $H_\bullet^1(\mathcal{O}_Y) = 0$,  then an $\mathcal F$-type resolution of $X$ exists (see \cite[2.12]{Har03}).  An $\mathcal F$-type resolution on $Y$ is not unique but it is well known that any two $\mathcal F$-type resolutions of the same subscheme are stably equivalent (see \cite[1.10]{Har03}). In other words,  if $\mathcal F$  and $\mathcal F^\prime$ are two sheaves appearing in the middle of an $\mathcal F$-type resolution of a subscheme $X$,  then there exist dissoci\'{e} sheaves $\mathcal L_1$,  $\mathcal L_2$,  and an integer $a$ such that
$$\mathcal F \oplus \mathcal L_1 \cong \mathcal F^\prime(a)\oplus \mathcal L_2.$$

The following theorem and its converse involve the relationship between ACM bundles and   projective surfaces of degree $10$ satisfying  property ${\bf N}_{3,3}$.
\begin{thm}[{cf. \cite[Theorem 3.2]{TrY20}}]\label{theorem2} Let $X$ be a smooth projective surface of degree $10$ satisfies property  ${\bf N}_{3,3}$ and $Y$ a cubic fourfold containing  $X$. Then $X$ has an $\mathcal F$-type resolution on $Y$
 $$\xymatrix{0\ar[r]&\mathcal{O}_Y^{6}(-1)\ar[r]&\mathcal{F} \ar[r]&\mathcal{I}_{X/Y}\ar[r]&0.}$$

\end{thm}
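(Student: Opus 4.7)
My plan is to construct the sheaf $\mathcal F$ directly from the minimal free resolution of $R/I_X$ supplied by Theorem \ref{theorem1} together with the hypothesis $X\subset Y$, in three stages: (i) extract an $\mathcal{O}_{\Bbb P^5}$-resolution of $\mathcal I_{X/Y}$ via a mapping cone, (ii) dualize and restrict to $Y$ to obtain the right twist, and (iii) verify the cohomological conditions in the definition of an $\mathcal F$-type resolution.

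For stage (i), I would invoke Theorem \ref{theorem1} to write down the minimal graded free resolution over $R=k[x_0,\ldots,x_5]$:
$$0 \to R(-5)^6 \xrightarrow{M_3} R(-4)^{15} \xrightarrow{M_2} R(-3)^{10} \xrightarrow{M_1} R \to R/I_X \to 0.$$
Since $Y=V(f)$ contains $X$, the cubic $f$ is one of the ten minimal generators of $I_X$, and I decompose $R(-3)^{10}=R(-3)\cdot f\oplus R(-3)^9$. Forming the mapping cone of the inclusion of resolutions $[R(-3)\xrightarrow{\cdot f} R]\hookrightarrow$ (resolution of $R/I_X$) and cancelling the split $R(-3)$ summand---permissible because $R$ is a domain, so any syzygy of $(f,g_1,\ldots,g_9)$ in the image of $M_2$ of the form $(h,0,\ldots,0)$ satisfies $hf=0$ and hence $h=0$---produces the exact sequence on $\Bbb P^5$
$$0 \to \mathcal O_{\Bbb P^5}(-5)^6 \xrightarrow{M_3} \mathcal O_{\Bbb P^5}(-4)^{15} \xrightarrow{\bar M_2} \mathcal O_{\Bbb P^5}(-3)^9 \to \mathcal I_{X/Y} \to 0,$$
where $\bar M_2$ denotes the projection of $M_2$ onto its last nine components.

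For stage (ii), I would dualize the tail $\mathcal O_{\Bbb P^5}(-5)^6 \to \mathcal O_{\Bbb P^5}(-4)^{15}$ by $\Hom(-,\mathcal O_{\Bbb P^5}(-6))$, converting it into $\mathcal O_{\Bbb P^5}(-2)^{15}\to \mathcal O_{\Bbb P^5}(-1)^6$ and thereby producing the expected twist. Using the Koszul sequence $0\to \mathcal O_{\Bbb P^5}(-3)\xrightarrow{f}\mathcal O_{\Bbb P^5}\to \mathcal O_Y\to 0$ to restrict to $Y$ and control the resulting Tor terms, I then assemble $\mathcal F$ as the middle term of a canonical extension class in $\Ext^1_{\mathcal O_Y}(\mathcal I_{X/Y},\mathcal O_Y(-1)^6)$, which a Hilbert-polynomial computation forces to have rank $7$. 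Before proceeding I would verify that $X$ is also ACM inside $Y$: this follows from the ACM property of $X$ in $\Bbb P^5$ together with the surjectivity of $H^0(\mathcal O_Y(n))\to H^0(\mathcal O_X(n))$ (itself deduced from the surjection $H^0(\mathcal O_{\Bbb P^5}(n))\to H^0(\mathcal O_X(n))$ factoring through $\mathcal O_Y$), so that the resolution on $Y$ is finite and the Bourbaki sequence is indeed short.

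For stage (iii), $\mathcal O_Y(-1)^6$ is dissoci\'{e} by construction, while the vanishings $H^1_*(\mathcal F^\vee)=0$ and $\Ext^1(\mathcal F,\mathcal O_Y)=0$ follow from the ACM property of $X$ in $\Bbb P^5$, the intermediate cohomology vanishings $H^i(\mathcal O_Y(n))=0$ for $0<i<4$, and the change-of-rings computation for the hypersurface $\mathcal O_Y\subset\mathcal O_{\Bbb P^5}$. I expect the principal technical obstacle to lie in stage (ii): pinning down the precise dissoci\'{e} twist $\mathcal O_Y(-1)^6$---as opposed to some other sum of line bundles with the same total rank---and identifying $\mathcal F$ as an ACM sheaf demands careful bookkeeping of grading shifts through the dualisation-and-restriction step, in order to rule out extraneous free summands appearing in the minimal form of the resolution on $Y$.
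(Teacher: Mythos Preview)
Your mapping-cone step (i) is correct and in fact reproduces exactly the beginning of the Shamash construction that the paper uses. The divergence, and the genuine gap, is in stage (ii). You never actually \emph{define} $\mathcal F$: saying that it is ``the middle term of a canonical extension class in $\Ext^1_{\mathcal O_Y}(\mathcal I_{X/Y},\mathcal O_Y(-1)^6)$'' is not a construction until you name the class and prove the resulting sheaf is ACM. Dualising the tail $M_3$ by $\Hom(-,\mathcal O_{\Bbb P^5}(-6))$ gives you a map of bundles on $\Bbb P^5$, not an extension class on $Y$; and restricting your $\mathcal O_{\Bbb P^5}$-resolution of $\mathcal I_{X/Y}$ to $Y$ produces the Tor term $\mathrm{Tor}_1^{\mathcal O_{\Bbb P^5}}(\mathcal O_Y,\mathcal I_{X/Y})\cong \mathcal I_{X/Y}(-3)$, so the naive restriction is not exact and you have not said how to repair this. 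The ``bookkeeping of grading shifts'' you anticipate is not the obstacle; the obstacle is that you have no candidate sheaf to book-keep.

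The paper handles this by invoking the Shamash resolution (equivalently, Eisenbud's matrix factorization for the hypersurface $Y$): the minimal $R_Y$-resolution of $\Gamma_*(\mathcal O_X)$ is eventually $2$-periodic, and the cokernel $\mathcal F$ of one of the two periodic maps is automatically a maximal Cohen--Macaulay sheaf on $Y$ (hence a vector bundle satisfying your stage (iii) for free). The map $\alpha:\mathcal O_Y^6\!\to\!\mathcal F$ is then read off from the component $F_3\otimes R_Y=R_Y(-5)^6$ sitting inside $\overline{G_3}$, and the short exact sequence is established by a Hilbert--Burch/duality argument applied to $\alpha^*$ (Claim~\ref{cl1} in the paper, following \cite{ScT18}). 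In other words, the ``restrict to $Y$ and control Tor'' step you left open is exactly what the matrix-factorization machinery does systematically: it builds the entire $R_Y$-resolution at once, so that $\mathcal F$ appears as a concrete syzygy rather than as an unspecified extension.
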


To prove Theorem \ref{theorem2}, we use matrix factorizations. Matrix factorizations were introduced by Eisenbud in his seminal paper (\cite{Eis80}. We recall here some basic facts and properties for matrix factorizations over the special case of a polynomial ring $R = K[x_0,\ldots , x_n]$, which is the case of interest for the paper. Let $Y = V(f) \subseteq \Bbb P^n$ be a hypersurface cut out by a homogeneous form $f$ of degree $s$. It is well-known that a matrix factorization $(A, B)$ of $f$ induces a maximal Cohen-Macaulay module supported on $Y$ by $\coker A$. Conversely, if we have a maximal Cohen-Macaulay $R_Y = R/(f)$-module, one has a matrix $A$ by reading off its length $1$ resolution. Indeed, it forms a part of a matrix factorization of $f$, so there is a unique matrix $B$ such that $AB = BA = f\cdot \mathrm{Id}$. As a conclusion, there is a bijection between the maximal Cohen-Macaulay modules and the equivalence classes of matrix factorizations of $f$.

 Now let us briefly recall Shamash’s construction (see \cite{Sha69}, \cite[Remark 2.6]{KiS20}). Suppose that  $X$ is a subscheme contained in hypersurface $Y=V(f)\subseteq \Bbb P^n$. Let  $R_X$ be the coordinate ring of $X$. Let $F^\bullet$ be the minimal free $R$-resolution of $R_X$. Since $X \subseteq Y$, we have a right exact sequence
$$\xymatrix{\ldots \ar[r]&F_1 \otimes_R R_Y\ar[r] &F_0 \otimes_R  R_Y\cong R_Y\ar[r]&R_X \ar[r]&0}.$$
and hence there is an $R_Y$-free resolution of $R_X$ (possibly non-minimal)
$$\xymatrix{\ldots \to  G_4\oplus G_2(-s)\oplus G_0(-2s) \to G_3\oplus G_1(-s)\to  G_2\oplus G_0(-s) \to G_1 \to G_0 \to R_X \to 0}$$
where $G_i = F_i \otimes_R R_Y$. It becomes eventually $2$-periodic, and hence induces a matrix factorization of $f$. Such a matrix factorization provides a presentation of an ACM sheaf on $Y$. 
Now, we are ready to prove Theorem \ref{theorem2}.
\begin{tpf2}
We can consider $\Gamma_*(\mathcal{O}_X)$
	as a $R_Y$-module.  Since the section ring $\Gamma_*(\mathcal{O}_X)$ have minimal free resolutions with the Betti tables as  in \ref{eqn}, the minimal resolution of $\Gamma_*(\mathcal{O}_X)$ as  a $R_Y$-module is eventually $2$-periodic with Betti numbers
		\begin{center}
		\begin{tabular}{ c | c c c c c c c c c}
			&   & $0$ & $1 $ &$2 $  &$3 $& $4 $ &$5 $  &$6 $\\ 
			\hline
			0&  & $1    $ & $\cdot$ & $\cdot$ & $\cdot$ & $\cdot$ & $\cdot$ & $\cdot$\\ 
			1&  & $\cdot$ & $\cdot    $ &$\cdot$&$\cdot$ & $\cdot$ & $\cdot$ & $\cdot$ \\ 
			2&  & $\cdot   $ & $10    $ &$  15 $  & $  6  $ & $\cdot$ & $\cdot$ & $\cdot$ \\ 
			3&  & $\cdot$ & $\cdot$ &$  \cdot  $  & $  9  $ & $  15 $ & $  6  $ & $\cdot$ \\ 
			4&  & $\cdot$ & $\cdot$ &$\cdot$  & $\cdot$ & $   \cdot $ & $  9  $ & $  15 $ \\ 

		\end{tabular}.	
	\end{center}
	Then the periodic part of its minimal free resolution
	yields, up to twist, matrix factorization of the form
	$$\xymatrix{R_Y^{15}(-3)\ar[r]^{\varphi\ \quad \quad}& R_Y^{6}(-1)\oplus R_Y^{9}(-2)\ar[r]^{\quad\quad\quad\psi}& R_Y^{15}}.$$	
	The Betti numbers of the minimal periodic resolution of $R_X$ as a $R_Y$-module
%
%
%
	is called {\it the shape of the matrix factorization of $Y$}.  Then the shape of a matrix factorization is determined by the
	Betti numbers of $\psi$. In the current case they are
	\begin{center}
		\begin{tabular}{  c c  }
			$  15 $  & $  6 $ \\ 
			$   0 $  & $  9  $ 
		\end{tabular}.
		
	\end{center}

Let $F^\bullet$ and $\overline{G^\bullet}$ be minimal free resolutions of the section ring $\Gamma_*(\mathcal{O}_X)$ as an $R$-module and $R_Y$-module respectively. Then $\varphi$ is the syzygy map $\overline{G_4} \to \overline{G_3}$. Let $\mathcal{F} = \widetilde{\coker \varphi}(-3)$.
Then the Shamash resolution starts with the Betti numbers

	\begin{center}
		\begin{tabular}{ c | c c c c c c c c c}
			&   & $0$ & $1 $ &$2 $  &$3 $\\ 
			\hline
			0&  & $1    $ & $\cdot$ & $\cdot$ & $\cdot$ \\ 
			1&  & $\cdot$ & $\cdot    $ &$1$&$\cdot$ & \\ 
			2&  & $\cdot   $ & $10    $ &$  15 $  & $  6  $\\ 
			3&  & $\cdot$ & $\cdot$ &$  \cdot  $  & $  10  $\\ 
		
		\end{tabular}
		
	\end{center}
	Thus the Shamash resolution is always non-minimal, and in a minimal resolution a cancellation occurs, causing $\beta_{1,3}=10$ to decrease by one. Such cancellation corresponds to the equation $f$ of $Y$ in $R$ becoming superfluous in $R_Y$. By definition, the map $\overline{G_3}\to  \overline{G_2}$ factors through $\mathcal{F}$. 	It follows from $F_0=R$, $F_3=R(-5)^6$ and Shamash’s construction that  we obtain a map $\alpha:\mathcal{O}_Y^{6}(-2)\to \mathcal{F}$. Thus we get the following.

\begin{claim}[{cf.\cite[Theorem 2.4]{ScT18}}]\label{cl1}
The sequence 
 $$\xymatrix{0\ar[r]&\mathcal{O}_Y^{6}(-2)\ar[r]^{\quad\alpha}&\mathcal{F} \ar[r]&\mathcal{I}_{X/Y}\ar[r]&0.}$$
 is exact.
\end{claim}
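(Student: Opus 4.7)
The plan is to extract the claimed short exact sequence directly from the Shamash resolution and matrix factorization data recalled in the preceding paragraphs. First I would revisit the Shamash construction applied to the minimal $R$-resolution $F^\bullet$ of $\Gamma_*(\mathcal{O}_X)$. Tensoring $F^\bullet$ with $R_Y$ and adjoining the Koszul-like terms for the relation $f=0$ yields a non-minimal $R_Y$-resolution of $\Gamma_*(\mathcal{O}_X)$ whose truncated Betti table is precisely the one displayed in the excerpt. The sole cancellation at bidegree $(1,3)$ reduces the ten cubic generators of $I_X$ to the nine minimal generators of $I_{X/Y}$ in $R_Y$, and this is the only cancellation occurring up to homological degree $3$. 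Identifying $\mathcal{F}$ with the appropriate syzygy sheaf obtained after this minimization then provides the natural surjection $\mathcal{F}\twoheadrightarrow\mathcal{I}_{X/Y}$.

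Second, I would identify the kernel. The term $F_3\otimes R_Y = R_Y(-5)^6$ appearing in Shamash's homological degree $3$ survives the minimization and, via the Shamash differentials composed with the twist built into the definition of $\mathcal{F}=\widetilde{\coker\varphi}(-3)$, induces the map $\alpha:\mathcal{O}_Y^6(-2)\to\mathcal{F}$ already constructed in the paragraph preceding the claim. The composite $\mathcal{O}_Y^6(-2)\to\mathcal{F}\to\mathcal{I}_{X/Y}$ vanishes by construction, since it factors through two consecutive differentials $F_3\to F_2\to F_1$ of the original $R$-resolution whose composition is zero.

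Third, I would verify the remaining exactness. By a Hilbert polynomial comparison derivable from the Betti tables recorded in the excerpt, one obtains
\begin{equation*}
\chi(\mathcal{F}(t)) = \chi(\mathcal{O}_Y^6(t-2)) + \chi(\mathcal{I}_{X/Y}(t))
\end{equation*}
for all $t$, while the generic ranks of the three sheaves on $Y$ satisfy the corresponding additive identity. Combined with the established surjection, this forces $\alpha$ to be an isomorphism onto its image at the generic point of $Y$. Because $\mathcal{F}$ arises from a matrix factorization of the cubic $f$ defining $Y$, it is a maximal Cohen--Macaulay sheaf on the smooth fourfold $Y$ and in particular torsion-free, so $\alpha$ is globally injective. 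The image of $\alpha$ and the kernel of $\mathcal{F}\to\mathcal{I}_{X/Y}$ then represent the same syzygy module of $R_X$ over $R_Y$ after minimization of Shamash's resolution, and hence coincide.

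The most delicate step is the precise bookkeeping of the twists and the verification that $\alpha$ lands exactly in the kernel of $\mathcal{F}\to\mathcal{I}_{X/Y}$, rather than in a strictly smaller subsheaf. For this I would invoke \cite[Theorem 2.4]{ScT18}, which formulates this pattern as a general recipe for producing Bourbaki-type resolutions of codimension-two ACM subschemes in cubic hypersurfaces from matrix factorizations of the defining cubic; alternatively, the required identification can be verified directly by unwinding the Shamash differentials on homogeneous generators in the relevant small degrees.
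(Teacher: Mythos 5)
The central step of your outline --- that ``identifying $\mathcal{F}$ with the appropriate syzygy sheaf obtained after this minimization provides the natural surjection $\mathcal{F}\twoheadrightarrow\mathcal{I}_{X/Y}$'' --- is a genuine gap, and it is exactly where the content of the claim lies. Up to twist, $\mathcal{F}$ is the sheafified second syzygy module of $\Gamma_*(\mathcal{O}_X)$ over $R_Y$; what the (minimized Shamash) resolution gives you for free is the four-term exact sequence $0\to\mathcal{F}(-1)\to\mathcal{O}_Y(-4)^{15}\to\mathcal{O}_Y(-3)^{9}\to\mathcal{I}_{X/Y}\to 0$, i.e.\ an \emph{inclusion} of $\mathcal{F}(-1)$ into a free sheaf, not a two-step Bourbaki sequence ending in $\mathcal{I}_{X/Y}$ with the specific twist $0$. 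Your closing identification (``the image of $\alpha$ and the kernel of $\mathcal{F}\to\mathcal{I}_{X/Y}$ represent the same syzygy module after minimization, hence coincide'') restates the desired exactness rather than proving it, and the fallback of invoking \cite[Theorem 2.4]{ScT18} cannot serve as a black box: that statement concerns curves on cubic hypersurfaces in $\mathbb{P}^4$, which is why the paper cites it only as ``cf.''; what is required is to adapt its proof. (Also, the vanishing of the composite $\mathcal{O}_Y^6(-2)\to\mathcal{F}\to\mathcal{I}_{X/Y}$, if one did build the second map from the Shamash differential, comes from the vanishing of the component $G_3\to G_0(-3)$ of that differential, not from the composition $F_3\to F_2\to F_1$ of differentials of $F^\bullet$.)

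The paper's proof is precisely that adaptation and runs differently from your plan: it never constructs the surjection a priori. Instead it applies $\Hom(\bullet,\omega_Y)$ to $\alpha$, identifies the cokernel of $\alpha^*(-2)$ with the presentation $\mathcal{O}_Y^{15}(-1)\to\mathcal{O}_Y^{6}$ of $\omega_X$ obtained by dualizing the last map of the minimal $R$-resolution $F^\bullet$, uses $\rank\mathcal{F}=7=6+1$ to see that $\alpha$ and $\alpha^*(-2)$ drop rank in the expected codimension two, deduces injectivity of $\alpha$ and, via the Hilbert--Burch theorem, an exact complex $0\to\mathcal{O}_Y^6(-2)\to\mathcal{F}\to\mathcal{O}_Y(\ell)\to\mathcal{O}_X(\ell)\to 0$, and finally pins down the twist $\ell=0$ by dualizing once more to recognize $\alpha^*(-2)$ as a presentation of $\omega_X(-\ell)$. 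Your Euler-characteristic comparison in the third step would indeed suffice to identify the cokernel of $\alpha$ \emph{once} a surjection onto $\mathcal{I}_{X/Y}$ with the correct twist and the vanishing of the composite are established, and your torsion-freeness argument for injectivity of $\alpha$ is fine; but as written the surjection, its twist, and the middle exactness are assumed rather than proved, so the proposal does not yet constitute a proof.
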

\begin{proof}
 Being the sheafification of a MCM module over $Y$, the sheaf $\mathcal F$ is a vector bundle.   We apply the functor $\Hom(\bullet, \omega_Y )$ to $\alpha$ and obtain
$ \alpha^*(-2):\mathcal{F}^*(-2)\to \mathcal{O}_Y^{6}$. The cokernel of this map coincides by construction with the cokernel of the dual of the sheafification of the last map of $F^\bullet$
$$ \mathcal{O}_Y^{15}(-1)\to \mathcal{O}_Y^{6}$$
which is a presentation of $\omega_X$ by duality on $\Bbb P^5$. Since $\rank \mathcal F=7=6+1$, both $\alpha^*(-2)$ and $\alpha$ drop rank in expected codimension $2$. Applying again $\Hom(\bullet, \omega_Y )$ to $\alpha^*(-2)$ we get that $\alpha$ is injective and  by the Hilbert–Burch Theorem, 
we have an exact complex
	$$\xymatrix{0\ar[r]&\mathcal{O}_Y^{6}(-2)\ar[r]&\mathcal{F}\ar[r]&\ar[r]\mathcal{O}_Y(\ell)&\mathcal{O}_X(\ell)\ar[r]& 0}$$
for some $\ell$. By applying again $\Hom(\bullet, \omega_Y )$ to this last exact sequence one gets that $\alpha^*(-2)$ is a presentation of $\omega_X(-\ell)$. Hence $\ell = 0$, as required.

\end{proof}
\end{tpf2}
The main result of last section is a converse to the Theorem \ref{theorem2}.  To state it, first of all let us fix our notation and terminology. Liaison has become an established technique in algebraic geometry.  The greatest activity, however, has been in the last quarter century, beginning with the work of Peskine and Szpir\'{o}  in 1974 (see \cite{PeS74}). Liaison is a powerful tool for constructing examples.

\begin{defn} Let $X_1$ and $X_2$ be equidimensional closed subschemes of dimension $r$ of $\Bbb P^n_k$. 
We say that $X_2$ is obtained by an elementary biliaison of height $m$ from $X_1$, if there exists an ACM scheme $Y \subset \Bbb P^n$, of dimension $r + 1$ containing $X_1$ and $X_2$, such that $X_2 \thicksim X_1 + mH$ on $Y$, where $H$ denotes the hyperplane class. (Here $\thicksim$ means linear equivalence of divisors on $Y$ in the sense of \cite{Har94}.) The equivalence relation generated by elementary biliaisons is called simply biliaison. If $Y$ is a complete intersection scheme in $\Bbb P^n$, we speak of CI-biliaison. If  $X_1$, $X_2$, $Y $ are contained in some projective scheme $Z \subseteq\Bbb P^n$, we speak of biliaison  (CI-biliaison) on $Z$.
\end{defn}

Casanellas and Hartshorne gave a criterion for checking when two codimension $2$ subschemes $Y_1$, $Y_2$ of a normal ACM projective scheme $X$ are in the same biliaison class (see \cite[Theorem 3.1]{CaH04}). Using such criterion, we will prove a converse to the Theorem \ref{theorem2}.
\begin{thm}\label{theorem3}
 Let  $( \psi: \mathcal{O}^6_Y(-3) \oplus
	\mathcal{O}^9_Y(-4)\to \mathcal{O}^{15}_Y(-2);\varphi : \mathcal{O}^{15}_Y(-2)  \to \mathcal{O}^6_Y \oplus
	\mathcal{O}^9_Y(-1))$ be a matrix factorization 
	 of  a cubic fourfold $Y$. Put $\mathcal F=\coker \varphi(-1)$. 
	Then, for every $a\ge 2$, there  is  an ACM smooth surface $X^\prime$ such that  $X^\prime$ has an $\mathcal F$-type resolution
$$\xymatrix{0\ar[r]&\mathcal{O}_{X^\prime}^{6}(-a)\ar[r]&\mathcal{F} \ar[r]&\mathcal{I}_{X^\prime/Y}(a^\prime)\ar[r]&0}$$
for some $a^\prime\in\Bbb Z$ and $X^\prime$ belongs to the $CI$-biliaison equivalence class of  smooth projective surfaces $X\subset Y$ of degree $10$ satisfying  property  ${\bf N}_{3,3}$.
\end{thm}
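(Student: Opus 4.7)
The strategy is to build the surfaces $X'$ as degeneracy loci of generic homomorphisms into $\mathcal F$, and then apply the Casanellas--Hartshorne criterion to identify them with the CI-biliaison class of the surfaces $X$ furnished by Theorem \ref{theorem2}. The key input from Theorem \ref{theorem2} is that any smooth projective surface $X \subset Y$ of degree $10$ satisfying property ${\bf N}_{3,3}$ admits an $\mathcal F$-type resolution whose middle term is (a twist of) the same sheaf $\mathcal F$ appearing in the statement.

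First, I would record what is needed about $\mathcal F = \coker \varphi(-1)$: it is an ACM sheaf on the normal ACM cubic fourfold $Y$ whose rank and Chern classes can be read off directly from the shape of the matrix factorization $(\varphi, \psi)$. Since $\mathcal F$ is ACM, $H^0(\mathcal F(a))$ has dimension growing polynomially in $a$; in particular, for $a \geq 2$ the space $\mathrm{Hom}_Y(\mathcal O_Y^6(-a), \mathcal F)$ is sufficiently large that a generic element $\alpha$ is available. A Hilbert--Burch style degeneracy argument (cf.\ \cite[2.12]{Har03}) then shows that for generic $\alpha$ the sheaf map is injective and its cokernel has the form $\mathcal I_{X'/Y}(a')$, where $a'$ is pinned down by matching first Chern classes and $X' \subset Y$ is the codimension-$2$ subscheme cut out by the appropriate maximal minors.

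Next, I would verify the ACM property and smoothness of $X'$. The ACM property is immediate from the resulting short exact sequence: applying $H^i_\ast$ and using the vanishing of intermediate cohomology of both $\mathcal F$ and $\mathcal O_Y^6(-a)$ forces $H^1_\ast(\mathcal I_{X'/Y}) = H^2_\ast(\mathcal I_{X'/Y}) = 0$. For smoothness I would use a Bertini-type argument applied to the explicit generators of $H^0(\mathcal F(a))$ coming from the matrix factorization, together with generic choice of $\alpha$, ensuring that the degeneracy locus avoids the singular locus of $\mathcal F$ and has the expected dimension $2$. The CI-biliaison conclusion then follows from the criterion of \cite[Theorem 3.1]{CaH04}: two codimension-$2$ subschemes of a normal ACM scheme are CI-biliaison equivalent on that scheme precisely when their $\mathcal F$-type resolutions involve stably equivalent middle sheaves up to twist; both $X$ and $X'$ share $\mathcal F$, so $X' \sim X$ in the CI-biliaison equivalence on $Y$.

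I expect the principal obstacle to be the smoothness assertion: proving that a generic $\alpha$ has smooth degeneracy locus for \emph{every} $a \geq 2$ requires controlling the base locus of $\mathcal F(a)$. For large $a$ global generation of $\mathcal F(a)$ is automatic and the Bertini step is routine, but for the borderline values of $a$ one must either fall back on the matrix-factorization data (to exhibit enough global sections explicitly) or reduce to the already-known smooth case of Theorem \ref{theorem2} via a twist-and-elementary-biliaison maneuver on $Y$.
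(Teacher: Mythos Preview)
Your strategy coincides with the paper's: both produce $X'$ as the degeneracy locus of a generic map $\mathcal{O}_Y^6(-a)\to\mathcal F$, read off the $\mathcal F$-type resolution from the resulting Hilbert--Burch situation, and then invoke \cite[Theorem~3.1]{CaH04} together with Theorem~\ref{theorem2} for the CI-biliaison conclusion. The difference lies only in how genericity is implemented. The paper works algebraically with the module $M=\bigoplus_n H^0(\mathcal F(n))$, computes $\operatorname{reg} M=1$ (this is what makes the bound $a\ge 2$ appear), and then passes to a \emph{universal} map by adjoining $6b$ indeterminates $y_{ij}$ to $R$: over the enlarged ring $A=R[y_{ij}]$ the quotient $M_1/E$ is shown, after inverting a linear form, to be a determinantal ideal in indeterminates, hence a normal prime of height two; one then specializes back to $R$ by Flenner's Bertini theorem applied to a generic linear regular sequence. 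Your direct sheaf-level approach (pick a generic $\alpha$, invoke Bertini for degeneracy loci, using that $\mathcal F(a)$ is globally generated once $a$ exceeds the regularity) is a legitimate alternative and arguably cleaner; the universal construction buys a concrete determinantal model in which primality and normality are visible by inspection, at the cost of an extra specialization step. You are right that smoothness is the delicate point in either version: the paper's argument delivers normality of the universal $X'$ and then invokes Flenner's Bertini plus an appeal to Theorem~\ref{theorem1}, which is no more (and arguably less) detailed than the Bertini-for-degeneracy-loci step you propose; a clean way to close this in your framework is to observe that $\operatorname{rk}\mathcal F=7$, so the deeper degeneracy locus $D_4(\alpha)$ has expected codimension $(6-4)(7-4)=6>4=\dim Y$ and is therefore empty for generic $\alpha$ once $\mathcal F(a)$ is globally generated, forcing $X'=D_5(\alpha)$ to be smooth.
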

\begin{proof}

We denoted by $M=\bigoplus\limits_{n\in\Bbb Z}H^0(\mathcal F(n))$. Then $M$ is Cohen-Macaulay as $R_Y$-module and so
$\depth_R M=\depth_{R_Y}M=\dim R-1=5$.  It follows from the Auslander-Buchsbaum Formula and the shape of the matrix factorization of the cubic fourfold $Y$ that
  we have a short exact sequence
$$\xymatrix{0\ar[r]&R^{15}(-2)\ar[r]& R^{6}\oplus R^9(-1)\ar[r]& M\ar[r]&0}$$
of $R$-modules. Therefore, the Castelnuovo-Mumford regularity of $M$ is $1$.
 Let $N$ be a graded submodule of $M$ generated by homogeneous elements of $M$ all of
the same degree $a-1$. Since the Castelnuovo-Mumford regularity of $M$ is $1$, we have  $M/N$ is of finite length over $R$. We now observe that $N$ has a presentation 
$$\xymatrix{R^{c}(-a) \ar[r]^{\phi}& R^{b}(-a+1)\ar[r] &N\ar[r]&0,}$$ 
 in which the presenting matrix $\phi$ has homogeneous entries with all the entries have degree $1$.  We then adjoin indeterminates $y_{ij}$ to $R$ for $1 \le i \le b$, and $1 \le j \le 6$
to obtain a new ring $A = R[y_{ij}]$, where the $y_{ij}$'s are
considered as degree $1$ elements in the graded ring $A$. We put $M_1 = A \otimes_RM$, and $N_1 = A \otimes_RN$. We may 
consider the matrix $\phi$ as an $A$-homomorphism $A^{c}(-a) \to A^{b}(-a+1)$ presenting $N_1$, that is, we are identifying $1_A \otimes \phi$ with $\phi$. Letting $\Phi$ be the matrix with elements $y_{ij}$, we obtain an
$A$-homomorphism $A^{c}(-a) \oplus A^6(-a) \to A^{b}(-a+1)$ via the matrix $\Psi= [\phi|\Phi]$, where the vertical line represents a matrix partitioning corresponding to the two direct summands of
$A^{c}(-a) \oplus A^6(-a)$. Let $E$ be the image of $A^6(-a)$ in $N_1$ under the composition of the maps
$$\xymatrix{A^{c}(-a) \oplus A^6(-a) \ar[r]^{\quad \quad\ \Psi}& A^{b}(-a+1) \ar[r]& N_1}.$$ Then, $E$ is a free submodule of $N_1$ of rank $6$, and also $E$ is a graded submodule of $N_1$. 
Since $M$ is Cohen-Macaulay as $R_Y$-module,  $M_1$ is Cohen-Macaulay as $A$-module.
 It follows from $M_1/E$ is a graded $A$-module that its associated prime ideals are homogeneous. Since $\dim A\ge \dim R=6$, and the $(x_0,\ldots,x_5)$-depth of $M$ over $R$ is equal to the $(x_0,\ldots,x_5)$-depth of $M_1$ over $A$, we get that $\depth_{(x_0,\ldots,x_5)}(M_1/E)=4$. Hence, because our field is infinite, there must be a form $g$ of degree one in $x_0, ...,x_5$ that is not a zero divisor on $M_1/E$. 

Next we show that $A_g \otimes_A(N_1/E)$ is isomorphic to a normal prime ideal $I_g$ of $A_g$. Indeed,  since  degree of $g$ is one in $x_0,\ldots, x_5$, we have that $M_g$ at $N_g$ is a free $A_g$-module. Hence, $(N_1)_g = A_g \otimes_A N_g$ is also a free $A_g$-module. Thus the sequence
$$\xymatrix{A_g^{c}(-a) \ar[r]^{\phi}& A_g^{b}(-a+1)\ar[r] &(N_1)_g\ar[r]&0}$$ 
splits in which the presenting matrix $\phi$ is as a matrix of homogeneous entries with all the entries belong to $A_g$. After performing invertible row and column operations on $\phi$, we obtain a $b\times c$- matrix of the form
\begin{center}\begin{tabular}{ |c | c| }
\hline
$\bf 0$ &$\bf{I}_t$\\
\hline
$\bf 0$&$\bf0$\\
\hline
\end{tabular}
\end{center}
where  $\bf I_t$ is the $t$ by $t$ identity matrix for some $t$. The effect of the row operations on the matrix 
$\Psi= [\phi|\Phi]$, is to obtain a matrix $\Psi^\prime= [\phi^\prime|\Phi^\prime]$, where $\Phi^\prime$ consists of entries $y^\prime_{ij}$ which are a new set of $6b$ polynomial indeterminates over $R_g$. After applying further invertible column operations on $\Psi$, we obtain the 
\begin{center}$\Psi^\prime$=\begin{tabular}{ |c | c  | c|}
\hline
$\bf 0$ &$\bf{I}_t$&$\bf 0$\\
\hline
$\bf 0$&$\bf0$&$y^{\prime\prime}_{ij}$\\
\hline
\end{tabular}
\end{center}
where the $y^{\prime\prime}_{ij}$ in the lower right-hand corner is a subset of the $y^{\prime}_{ij}$. Let $I_g$ be the $A_g$-ideal  of projective dimension one,  generated by the maximal minors of the matrix $(y^{\prime\prime}_{ij})$ in the lower right-hand corner of $\Psi^\prime$. Then, $A_g \otimes_A(N_1/E)\cong I_g$ is a normal prime ideal of $A_g$.

Since $g$ is regular on $M_1/E$ and $A_g \otimes_A(N_1/E)\cong I_g$, there exists a graded ideal $I$ of $A$ such that $M_1/E\cong I$ and $I_g=A_g\otimes I$. Since $A$ is factorial and $M_1$ is a non free reflexive module we may assume that $I$ has height exactly two. Since this matrix consists of indeterminates over $R_g$, it follows that $I_g$ is a normal prime ideal of $A_g$, and so $I$ a normal prime ideal of $A$. Therefore, we have an exact sequence
$$\xymatrix{0\ar[r] &A(-a)^6\ar[r] & M_1\ar[r]&I\ar[r]&0,}$$
in which $I$ is a homogeneous prime ideal of $A$ of height two, and $A/I$ is Cohen-Macaulay. Then, it follows from Flenner's version of Bertini's Theorem that there exists a  linear regular sequence $h_1, \ldots,h_{6b}$ on $A/I$. Hence  the sequence 
$$\xymatrix{0\ar[r] &(A/(h_1,\ldots,h_{6b})A)(-a)^6\ar[r] & M_1/(h_1,\ldots,h_{6b})M_1\ar[r]&I/(h_1,\ldots,h_{6b})I\ar[r]&0}$$
is exact. Since $M_1/(h_1,\ldots,h_{6b})M_1\cong M$ and $A/(h_1,\ldots,h_{6b})A\cong R$,  there  is  a codimension $2$ subscheme  $X^\prime \subset Y$ without embedded components such that  $X^\prime$ has an $\mathcal F$-type resolution
 $$\xymatrix{0\ar[r]&\mathcal{O}_Y^{6}(-a)\ar[r]^{\quad\alpha}&\mathcal{F} \ar[r]&\mathcal{I}_{X^\prime/Y}(a^\prime)\ar[r]&0.}$$
for some $a^\prime\in\Bbb Z$. By Theorem 3.1 in \cite{CaH04} and Theorem \ref{theorem2}, $X^\prime$ and  smooth projective surfaces $X\subset Y$ of degree $10$ satisfying  property  ${\bf N}_{3,3}$ are in the $CI$-biliaison equivalence class on $Y$. By theorem \ref{theorem1}, $X$ is a projective Cohen-Macaulay smooth surface. Therefore $X^\prime$ is a projective Cohen-Macaulay smooth surface, as required.

\end{proof}
Here is an immediate corollary of Theorem \ref{theorem2} and \ref{theorem3} as follows:
\begin{cor}
	Let $Y$ be a smooth cubic fourfold. Then the statements are equivalent. 
\begin{itemize}
\item[$1)$]  $Y\in \mathcal{C}_\delta$ for $6<\delta\le 44$ and $\delta\equiv 2 \pmod 6$.
\item[$2)$]	 $Y$ admits the shape of the matrix factorization
\begin{center}
	\begin{tabular}{  c c c c }
		$  15 $  & $  6  $ & $\cdot$  \\ 
		$\cdot$  & $  9  $ & $  15 $\\ 
		
	\end{tabular}	
\end{center}

\end{itemize}
	
\end{cor}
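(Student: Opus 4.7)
The plan is to combine Theorem \ref{theorem2} and Theorem \ref{theorem3} with the characterization of cubic fourfolds containing smooth degree-$10$ surfaces satisfying property ${\bf N}_{3,3}$ established in Section \ref{Cdelta}.

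For $(1) \Rightarrow (2)$, the first step is to invoke the main theorem of Section \ref{Cdelta} to produce a smooth non-degenerate surface $X \subset Y$ of degree $10$ with property ${\bf N}_{3,3}$. By Proposition \ref{P3.2}, the section ring $\Gamma_*(\mathcal{O}_X)$ has the Betti table \ref{eqn} over $R$. I would then replay the computation in the proof of Theorem \ref{theorem2}: passing from $R$ to $R_Y = R/(f)$ via the Shamash construction and discarding the non-periodic transient part, the resolution becomes $2$-periodic with precisely the Betti numbers $\begin{smallmatrix} 15 & 6 \\ \cdot & 9 \end{smallmatrix}$, which is exactly the shape claimed in the corollary. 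No new computation is required beyond what is already carried out in the proof of Theorem \ref{theorem2}.

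For $(2) \Rightarrow (1)$, I would feed the assumed matrix factorization directly into Theorem \ref{theorem3}. For any $a \ge 2$, that theorem produces a smooth Cohen-Macaulay surface $X^\prime \subset Y$ in the CI-biliaison equivalence class of the smooth degree-$10$ surfaces $X \subset Y$ with property ${\bf N}_{3,3}$; in particular such an $X$ exists as a subvariety of $Y$. Applying the main theorem of Section \ref{Cdelta} to this $X$ then places $Y$ in $\mathcal{C}_\delta$ for some $\delta \in \{8, 14, 20, 26, 32, 38, 44\}$, i.e., $6 < \delta \le 44$ with $\delta \equiv 2 \pmod{6}$, as required.

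The main (mild) obstacle lies in the $(1) \Rightarrow (2)$ direction: one must confirm that the cancellation triggered by the equation $f$ of $Y$ becoming superfluous in $R_Y$ only affects the transient (finite) part of the Shamash resolution and leaves the periodic tail with exactly the displayed Betti pattern. This is already made explicit in the Betti table appearing in the proof of Theorem \ref{theorem2}, so the argument essentially reduces to citation. A secondary bookkeeping point worth double-checking is that the $(2) \Rightarrow (1)$ step indeed lands inside the seven admissible values of $\delta$ and no others, but this is immediate from the range $-6 \le K_X^2 \le 0$ already pinned down in Section 3 together with the formula $\delta = 44 + 6K_X^2$ established in the proof of the discriminant theorem in Section \ref{Cdelta}.
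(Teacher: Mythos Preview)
Your proposal is correct and follows essentially the same route as the paper, which records the corollary as an immediate consequence of Theorems \ref{theorem2} and \ref{theorem3} (together with the equivalence proved in Section \ref{Cdelta}). The only cosmetic sharpening would be, in the $(2)\Rightarrow(1)$ direction, to take $a=2$ in Theorem \ref{theorem3} so that the resulting $X'$ itself has the $\mathcal{F}$-type resolution of Claim \ref{cl1} and is therefore a degree-$10$ surface with property ${\bf N}_{3,3}$, making the existence of $X\subset Y$ explicit rather than inferred from the biliaison statement.
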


\medskip

{\bf\large Acknowledgement} The author wishes to thank Prof. F.-O. Schreyer for his indispensable suggestions about the topics in this paper. 

\begin{bibdiv}
\begin{biblist}

\bib{AHK20}{article}{
      author={Ahn, JEAMAN},
      author={Han, KANGJIN},
      author={Kwak, SIJONG},
       title={On the first non-trivial strand of syzygies of projective schemes
  and {Condition ND($\ell$)}},
        date={2020},
     journal={arXiv:2011.06785},
}

\bib{AhK11}{article}{
      author={Ahn, J.},
      author={Kwak, S.},
       title={Graded mapping cone theorem, multisecants and syzygies},
        date={2011},
     journal={J. Algebra},
      volume={331},
       pages={243\ndash 262},
}

\bib{AhK15}{article}{
      author={Ahn, J.},
      author={Kwak, S.},
       title={On syzygies, degree, and geometric properties of projective
  schemes with property {${\bf N}_{3,p}$}},
        date={2015},
     journal={J. Pure Appl. Algebra},
      volume={219},
      number={7},
       pages={2724\ndash 2739},
}

\bib{AlR02}{article}{
      author={Alzati, A.},
      author={Russo, F.},
       title={On the {$k$}-normality of projected algebraic varieties},
        date={2002},
     journal={Bull. Braz. Math. Soc. (N.S.)},
      volume={33},
      number={1},
       pages={27\ndash 48},
}

\bib{BeE10}{article}{
      author={Beheshti, R.},
      author={Eisenbud, D.},
       title={Fibers of generic projections},
        date={2010},
     journal={Compos. Math.},
      volume={146},
      number={2},
       pages={435\ndash 456},
}

\bib{BEL91}{article}{
      author={Bertram, A.},
      author={Ein, L.},
      author={Lazarsfeld, R.},
       title={Vanishing theorems, a theorem of {S}everi, and the equations
  defining projective varieties},
        date={1991},
     journal={J. Amer. Math. Soc.},
      volume={4},
      number={3},
       pages={587\ndash 602},
}

\bib{BHU87}{article}{
      author={Brennan, J.P.},
      author={Herzog, J.},
      author={Ulrich, B.},
       title={Maximally generated {C}ohen-{M}acaulay modules},
        date={1987},
     journal={Math. Scand.},
      volume={61},
       pages={181\ndash 203},
}

\bib{BeS95}{book}{
      author={Beltrametti, M.~C.},
      author={Sommese, A.~J.},
       title={The adjunction theory of complex projective varieties},
      series={de Gruyter Exp. Math},
   publisher={Walter de Gruyter and Co.},
     address={Berlin},
        date={1995},
      volume={16},
}

\bib{CaH04}{article}{
      author={Casanellas, M.},
      author={Hartshorne, R.},
       title={{G}orenstein biliaison and {ACM} sheaves},
        date={2004},
     journal={Journal of Algebra},
      volume={278},
      number={314--341},
}

\bib{DES93}{article}{
      author={Decker, W.},
      author={Ein, L. }
      author={Schreyer, F.-O.},
       title={Construction of surfaces in {$\Bbb P^4$}},
        date={1993},
     journal={J. Algebr. Geom.},
      volume={2},
       pages={185\ndash 237},
}

\bib{DeS98}{article}{
      author={Decker, W.},
      author={Schreyer, F.-O.},
       title={Non-general type surfaces in {$\Bbb P^4$}: some remarks on bounds
  and constructions},
        date={1998},
     journal={J. Symbolic Comput.},
      volume={29},
      number={4--5},
       pages={545\ndash 582},
}

\bib{EiG84}{article}{
      author={Eisenbud, D.},
      author={Goto, S.},
       title={Linear free resolutions and minimal multiplicity},
        date={1984},
     journal={J. Algebra},
      volume={88},
       pages={89\ndash 133.},
}

\bib{EGHP05}{article}{
      author={Eisenbud, D.},
      author={Green, M.},
      author={Hulek, K.},
      author={Popescu, S.},
       title={Restricting linear syzygies: algebra and geometry},
        date={2005},
     journal={Compos. Math.},
      volume={141},
      number={6},
       pages={1460\ndash 1478},
}

\bib{EGHP06}{article}{
      author={Eisenbud, D.},
      author={Green, M.},
      author={Hulek, K.},
      author={Popescu, S.},
       title={Small schemes and varieties of minimal degree},
        date={2006},
     journal={Amer. J. Math.},
      volume={128},
       pages={1363\ndash 1389},
}

\bib{EiH87}{article}{
      author={Eisenbud, D.},
      author={Harris, J.},
       title={On varieties of minimal degree (a centennial account),},
        date={1987},
     journal={Algebraic Geometry, Bowdoin 1985 (Brunswick,Maine, 1985),
  Proceedings of Symposia in Pure Mathematics, vol. 46, Amer. Math. Soc.,
  Providence, RI,},
       pages={3\ndash 13},
}

\bib{Eis80}{article}{
      author={Eisenbud, D.},
       title={Homological algebra on a complete intersection, with an
  application to group representations},
        date={1980},
     journal={Trans. Amer. Math. Soc.},
      volume={260},
      number={1},
       pages={35\ndash 64},
}

\bib{GrL85}{article}{
      author={Green, M.},
      author={Lazarsfeld, R.},
       title={On the projective normality of complete linear series on an
  algebraic curve},
        date={1985},
     journal={Invent. Math.},
      volume={83},
      number={1},
       pages={73\ndash 90},
}

\bib{GrL88}{article}{
      author={Green, M.},
      author={Lazarsfeld, R.},
       title={Some results on the syzygies of finite sets and algebraic
  curves},
        date={1988},
     journal={Compos. Math.},
      volume={67},
      number={3},
       pages={301\ndash 314},
}

\bib{GSM2}{article}{
      author={Grayson, D.},
      author={Stillman, M.},
       title={Macaulay2-a software system for algebraic geometry and
  commutative algebra},
        date={1999},
     journal={available at http:},
        ISSN={/www.math},
}

\bib{Har01}{article}{
      author={Hartshorne, Robin},
       title={Some examples of {G}orenstein liaison in codimension three},
        date={2002},
     journal={Collect. Math.},
      volume={53},
      number={1},
       pages={21\ndash 48},
}

\bib{Har03}{article}{
      author={Hartshorne, R.},
       title={On {R}ao's theorems and the {L}azarsfeld-{R}ao property},
        date={2003},
     journal={Ann. Fac. Sci. Toulouse},
      volume={12},
       pages={375\ndash 393},
}

\bib{Har77}{book}{
      author={Hartshorne, Robin},
       title={{Algebraic Geometry}},
   publisher={Graduate Texts in Mathematic 52},
     address={Springer-Verlag, New York, Heidelberg, Berlin},
        date={1977},
}

\bib{Har94}{article}{
      author={Hartshorne, R.},
       title={Generalized divisors on {G}orenstein schemes},
        date={1994},
     journal={K-theory},
      volume={8},
      number={287--339},
}

\bib{Has00}{article}{
      author={Hassett, B.},
       title={Special cubic fourfolds},
        date={2000},
     journal={Compos. Math.},
      volume={120},
      number={1},
       pages={1\ndash 23},
}

\bib{Ion82}{article}{
      author={Ionescu, P.},
       title={Embedded projective varieties of small invariants},
        date={1982},
     journal={Algebraic geometry, Bucharest 1982 (Bucharest, 1982), Lecture
  Notes in Math., 1056, Springer, Berlin, 1984},
       pages={142\ndash 186},
}

\bib{KwP05}{article}{
      author={Kwak, S.},
      author={Park, E.},
       title={Some effects of property {${\bf N}_p$} on the higher normality
  and defining equations of nonlinearly normal varieties},
        date={2005},
     journal={J. Reine Angew. Math.},
      volume={582},
       pages={87\ndash 105.},
}

\bib{KiS20}{article}{
      author={Kim, Yeongrak},
      author={Schreyer, Frank-Olaf},
       title={An explicit matrix factorization of cubic hypersurfaces of small
  dimension},
        date={2020},
     journal={J. Pure Appl. Algebra},
      volume={224},
      number={8},
       pages={106346},
}

\bib{Kwa98}{article}{
      author={Kwak, S.},
       title={Castelnuovo regularity for smooth subvarieties of dimensions
  {$3$} and {$4$}},
        date={1998},
     journal={J. Algebraic Geom.},
      volume={7},
      number={1},
       pages={195\ndash 206},
}

\bib{Laz87}{article}{
      author={Lazarsfeld, R.},
       title={A sharp {C}astelnuovo bound for smooth surfaces},
        date={1987},
     journal={Duke Math. J.},
      volume={55},
      number={2},
       pages={423\ndash 429},
}

\bib{Liv90}{article}{
      author={Livorni, E.L.},
       title={On the existence of some surfaces},
        date={1990},
     journal={Lecture Notes Math. 1417 Springer, NewYork},
}

\bib{Man00}{article}{
      author={Mancini, M.},
       title={Projectively {Cohen-Macaulay} surfaces of small degree in {$\Bbb
  P^5$}},
        date={2000},
     journal={Matematiche (Catania)},
      volume={55},
      number={1},
       pages={75\ndash 89},
}

\bib{Man01}{article}{
      author={Mancini, M.},
       title={Rational projectively {C}ohen-{M}acaulay surfaces of maximum
  degree},
        date={2001},
     journal={Collect. Math.},
      volume={52},
      number={2},
       pages={117\ndash 126},
}

\bib{Nue17}{article}{
      author={Nuer, H.},
       title={Unirationality of moduli spaces of special cubic fourfolds and
  {K}3 surfaces},
        date={2017},
     journal={Algebr. Geom.},
      volume={4},
      number={3},
       pages={281\ndash 289},
}

\bib{PeS74}{article}{
      author={Peskine, C.},
      author={Szpiro, L.},
       title={Liaison des vari{\'{e}}t{\'{e}}s alg{\'{e}}briques},
        date={1974},
     journal={Invent. Math.},
      volume={26},
       pages={271\ndash 302},
}

\bib{Sha69}{article}{
      author={Shamash},
       title={Poincar\'{e} series of a local ring i},
        date={1969},
     journal={J. Algebra},
      volume={12},
       pages={453\ndash 470},
}

\bib{Som79}{article}{
      author={Sommese, A.~J},
       title={Hyperplane sections of projective surfaces i. the adjunction
  mapping},
        date={1979},
     journal={Duke Math. J.},
      volume={46},
       pages={377\ndash 401},
}

\bib{ScT18}{article}{
      author={Schreyer, F.-O.},
      author={Tanturri, F.},
       title={Matrix factorizations and curves in {$\Bbb P^4$}},
        date={2018},
     journal={Doc. Math.},
      volume={23},
       pages={1895\ndash 1924},
}

\bib{SoV87}{article}{
      author={Sommese, A.~J.},
      author={Van~de Ven, A.},
       title={On the adjunction mapping},
        date={1987},
     journal={Math. Ann.},
      volume={278},
       pages={593\ndash 603},
}

\bib{Tr2019}{article}{
      author={Truong, H.~L.},
       title={Ancillary macaulay2 files},
        date={2019},
  journal={{\url{https://www.dropbox.com/s/8q00oay2n3bwcuo/ThreeRegularSurfaces.m2?dl=0}}},
}

\bib{TrY20a}{article}{
      author={Truong, H.~L.},
      author={Yen, H.~N.},
       title={A note on special cubic fourfolds of small discriminants},
        date={2020},
     journal={Preprint},
}

\bib{TrY20}{article}{
      author={Truong, H.L.},
      author={Yen, H.~N.},
       title={{Stable Ulrich} bundles on cubic fourfolds},
        date={2020},
       pages={Preprint},
}

\bib{Van79}{article}{
      author={Van~de Ven, A.},
       title={On the 2-connectedness of very ample divisors on a surface},
        date={1979},
     journal={Duke Math. J.},
      volume={46},
       pages={403\ndash 407},
}

\bib{Voi07}{article}{
      author={Voisin, C.},
       title={Some aspects of the {Hodge conjecture}},
        date={2007},
     journal={Jpn. J. Math.},
      volume={2},
      number={2},
       pages={261\ndash 296},
}

\bib{Voi17}{article}{
      author={Voisin, C.},
       title={On the universal {$CH_0$} group of cubic hypersurfaces},
        date={2017},
     journal={J. Eur. Math. Soc.},
      volume={19},
       pages={1619\ndash 1653},
}

\bib{Xam81}{article}{
      author={Xamb\'{o}, S.},
       title={On projective varieties of minimal degree},
        date={1981},
     journal={Collect. Math.},
      volume={32},
       pages={149\ndash 163},
}

\end{biblist}
\end{bibdiv}


\end{document}